\def\SO{\mathop{\rm SO}\nolimits}
\def\SL{\mathop{\rm SL}\nolimits}
\def\PSL{\mathop{\rm PSL}\nolimits}          
\def\PSp{\mathop{\rm PSp}\nolimits}           
\def\PGL{\mathop{\rm PGL}\nolimits}           
\def\SU{\mathop{\rm SU}\nolimits}          
\def\PSU{\mathop{\rm PSU}\nolimits}          
\def\Sp{\mathop{\rm Sp}\nolimits}
\def\dim{\ \mathop{\rm dim}\nolimits\ }
\def\diag{\mathop{\rm diag}\nolimits}
\newtheorem{theorem}{Theorem}[section] 
\newtheorem{lemma}[theorem]{Lemma}     
\newtheorem{proposition}[theorem]{Proposition}
\newtheorem{remark}[theorem]{Remark}
\newtheorem*{theo}{Theorem}
\newcommand{\Z}{\mathbb{Z}}    
\newcommand{\F}{\mathbb{F}}    
\begin{document}

\title[The $(2,3)$-generation of the classical simple groups]{The $(2,3)$-generation of the classical simple \\ groups of dimension $6$ and $7$}

\author{Marco Antonio Pellegrini}
\address{Dipartimento di Matematica e Fisica, Universit\`a Cattolica del Sacro Cuore, Via Musei 41,
I-25121 Brescia, Italy}
\email{marcoantonio.pellegrini@unicatt.it}

\keywords{Generation; Classical groups}
\subjclass[2010]{20G40, 20F05}
\dedicatory{Dedicated to M. C. Tamburini on the occasion of her 70th birthday}

\begin{abstract}
In this paper we prove that the finite simple groups $\PSp_6(q)$,  $\Omega_7(q)$ and  $\PSU_7(q^ 2)$ are $(2,3)$-generated
for all $q$. In particular, this result  completes the classification of the $(2,3)$-generated finite classical simple groups up to dimension $7$.
\end{abstract}

\maketitle

\section{Introduction}

Given a finite group, it is very  natural to ask for a minimal set of generators. For finite non-abelian simple groups it is well known that they can be generated by a pair $X,Y$ of suitable elements: for alternating groups this is an old result of Miller \cite{Mill}, for groups of Lie type it is due to Steinberg \cite{Stein} and for sporadic groups to Aschbacher and Guralnick \cite{AG}.

Since non-abelian simple groups have even order, on many occasions group theorists require that one of these two elements, say $X$, is an involution. It was conjectured (for example, see \cite{DT}) that every finite non-abelian simple group can be generated by an involution $X$ and an element $Y$ of order $\geq 3$. For alternating and sporadic groups the answer was already provided in \cite{Mill} and \cite{AG}, respectively. Groups of Lie type attracted the interest of many authors: among them,  Malle, Saxl and Weigel proved in \cite{MSW}  the validity of the conjecture (at least for $G\neq \PSU_3(9)$), taking $Y$ to be strongly real. Their result clearly implies that every finite non-abelian simple group different from $\PSU_3(9)$ is generated by a set of three involutions. For $G=\PSU_3(9)$ a direct computations shows that this group can be generated by an involution and an element of order $7$.

Considering simple groups of Lie type, one can require, for instance, that $Y$ is contained in few maximal subgroups, as done for instance in \cite{GK}, or that the order of $Y$ is a prime.
In particular we say that a group is  $(2,p)$-generated if it can be generated by two elements $X,Y$ of respective orders $2$ and $p$, where $p$ is a prime.
Since groups generated by two involutions are dihedral, we must have $p\geq 3$.  It seems that the difficulties increase when one investigates the $(2,p)$-generation for some fixed small prime $p$. The choice $p=3$ is the most natural, since $(2,3)$-generated groups are, along with the groups of order at most $3$, the homomorphic images of $\PSL_2(\Z)$. 

A key result for this kind of problem is due to Liebeck and Shalev who proved in \cite{LS} that, apart from the infinite families $\PSp_4(2^m)$, $\PSp_4(3^ m)$ and ${}^2B_2(2^{2m+1})$,  all finite non-abelian simple groups are $(2,3)$-generated with a finite number of exceptions. 
However, their result relies on probabilistic methods and does not provide any estimates on the number or the distribution of such exceptions.
The exceptional groups of Lie type were studied by L\"{u}beck and Malle \cite{LM} and so
the problem of  finding the exact list of simple groups which are not $(2,3)$-generated reduces to the classical groups where it is still wide open (see Problem 18.98 in \cite{MK}). 
In view of papers such as \cite{TGL,TW,TWG}, we have to consider only groups of small dimension, say, less than $13$ for $\PSL_n(q)$ and less than $88$ for the other classical groups.

In \cite{M}, Macbeath dealt with the groups $\PSL_2(q)$. For classical groups of dimension $3$ and $5$ the groups
which are not $(2,3)$-generated are described in \cite{35} and for dimension $4$ in \cite{PTV}. 

With this paper we conclude the classification of the $(2,3)$-generated finite classical simple  groups up to dimension $7$. The $(2,3)$-generation of $\SL_6(q)$, $\SL_7(q)$ and $\SU_6(q^ 2)$ (and their projective images) for all $q$ was proved in \cite{T6}, \cite{T7} and \cite{PPT}, respectively. When $q$ is odd, the symplectic groups $\Sp_6(q)$ are not $(2,3)$-generated (see \cite{V}), but the simple groups $\PSp_6(q)$ are $(2,3,7)$-generated for $q>3$ 
(see \cite{TV6}) and $\PSp_6(3)$ is $(2,3)$-generated, for instance, by the projective images of the following matrices: 
$$x=\left(\begin{array}{cccccc}
-1 & 0 & 0 & 1 & 0 & 0\\
0 & -1 & 0 & 0 & 1 & 0\\
0 & 0 & 0 & 0 & 0 & 1\\
1 & 0 & 0 & 1 & 0 & 0\\
0 & 1 & 0 & 0 & 1 & 0\\
0 & 0 & -1 & 0 & 0 & 0\\
\end{array}\right),\quad
y=\left(\begin{array}{cccccc}
0 & 0 & 1 & 1 & 1 & 1\\
1 & 0 & 0 & 1 & 1 & 1\\
0 & 1 & 0 & 1 & 1 & 1\\
0 & 0 & 0 & 0 & 0 & 1\\
0 & 0 & 0 & 1 & 0 & 0\\
0 & 0 & 0 & 0 & 1 & 0
\end{array}
\right).$$
The remaining cases,  studied in this paper, are the groups $\Sp_6(q)$ for $q$ even,
$\Omega_7(q)$ for $q$ odd and $\SU_7(q^ 2)$ for all $q$ (our notation for the classical
groups accords with  \cite{Car}).
Observe that among these classes of groups only $\PSU_7(p^{n_7})$, for any prime $p\neq
7$ such that  its order $n_7$ modulo $7$ is even, are $(2,3,7)$-generated, see \cite{PTG2,
TV1}.

From Theorems \ref{main6}, \ref{main7}, \ref{mainOm7} and Propositions \ref{U72} and \ref{Om35} below, we deduce the following.

\begin{theo}
The finite simple groups $\PSp_6(q)$, $\Omega_7(q)$ and $\PSU_7(q^ 2)$ are $(2,3)$-gene\-ra\-ted for all $q$.
\end{theo}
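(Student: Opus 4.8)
The statement assembles three infinite families, so the plan is to establish the $(2,3)$-generation of each family separately. Throughout, I would reduce the problem for the projective simple group to producing a pair $(x,y)$ with $x^2=y^3=1$ that generates the corresponding matrix group: since in the families constructed here the centre has order $1$ or $7$, hence coprime to $6$, passing to the quotient modulo scalars preserves the orders of $x$ and $y$ and yields $(2,3)$-generation of the simple group. For $\PSp_6(q)$ with $q$ odd no new construction is needed, since the $(2,3,7)$-generation proved in \cite{TV6} for $q>3$ already gives $(2,3)$-generation and the explicit projective pair displayed in the introduction settles $q=3$. The work therefore focuses on the families treated in this paper: $\Sp_6(q)$ with $q$ even (which also covers $\PSp_6(q)$ in characteristic $2$), $\Omega_7(q)$ with $q$ odd (the even case being subsumed under $\Sp_6(q)$ via the exceptional isomorphism in characteristic $2$), and $\SU_7(q^2)$.

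For each of these families I would follow the explicit template underlying Theorems \ref{main6}, \ref{main7} and \ref{mainOm7}. First, I would exhibit, uniformly in $q$, an involution $x$ and an element $y$ of order $3$, typically fixing $y$ in a convenient rational form and letting $x$ depend on one or two parameters ranging over $\F_q$ (over $\F_{q^2}$ in the unitary case). Second, I would verify that the pair preserves the defining form — the alternating form for $\Sp_6$, the quadratic form for $\Omega_7$, the Hermitian form for $\SU_7$ — so that $H=\langle x,y\rangle$ lies in the classical group, computing the determinant and the spinor norm in the orthogonal case to ensure that $H$ lands in $\Omega_7$ and not merely in $\SO_7$. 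Third, I would prove that $H$ acts absolutely irreducibly on the natural module, which is the decisive input for excluding most reducible and imprimitive overgroups.

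It then remains to show that $H$ lies in no maximal subgroup, which I would organize along Aschbacher's classes. Absolute irreducibility and primitivity dispose of $\mathcal{C}_1$ and $\mathcal{C}_2$; an analysis of the characteristic polynomial and of the subfield generated by the matrix entries excludes the field-extension class $\mathcal{C}_3$; the tensor and local classes $\mathcal{C}_4$, $\mathcal{C}_6$, $\mathcal{C}_7$ are handled by the tensor-indecomposability of the natural module together with dimensional and order-theoretic considerations (in dimension $7$, which is prime, most of these classes are empty), and in the symplectic case $\mathcal{C}_8$ requires separately ruling out the orthogonal subgroups $\O_6^{\pm}(q)$. I expect the real difficulties to lie in two places. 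The first is the subfield class $\mathcal{C}_5$, which forces a careful choice of parameters guaranteeing that the entries of $x$ and $y$ generate the full field $\F_q$ (respectively $\F_{q^2}$). The second, and the main obstacle, is the almost simple class $\mathcal{S}$: in dimensions $6$ and $7$ it contains the subgroup $G_2(q)$ — inside $\Sp_6(q)$ for $q$ even and inside $\Omega_7(q)$ for $q$ odd — which is itself $(2,3)$-generated, so it cannot be dismissed by order estimates and must be excluded by a genuine argument, for instance by checking that $(x,y)$ does not preserve the additional trilinear structure defining $G_2(q)$; the remaining cross- and defining-characteristic candidates in $\mathcal{S}$ are then eliminated by element orders and traces. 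Finally, the finitely many small values of $q$ for which the uniform construction degenerates are precisely those isolated in Propositions \ref{U72} and \ref{Om35}, and I would settle these by exhibiting an explicit generating pair directly. Combining all cases with the results quoted for $\PSp_6(q)$ with $q$ odd yields the $(2,3)$-generation of the three families, completing the proof.
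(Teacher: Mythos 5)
Your proposal follows essentially the same route as the paper: reduce to the matrix groups $\Sp_6(q)$ ($q$ even), $\Omega_7(q)$ ($q$ odd) and $\SU_7(q^2)$, exhibit uniform parametrized $(2,3)$-pairs preserving the relevant form, prove absolute irreducibility, and then eliminate maximal overgroups Aschbacher class by class, with $\mathcal{C}_5$ handled by a field-generation condition on the parameter and the class $\mathcal{S}$ subgroup $G_2(q)$ as the main obstruction, leaving finitely many small $q$ to explicit computation (Propositions \ref{U72} and \ref{Om35}). The only notable divergence is cosmetic: where you suggest excluding $G_2(q)$ via the invariant trilinear form, the paper instead uses the shape of semisimple classes in $G_2$ to force a polynomial relation among the coefficients of $\chi_{xy}(t)$ that the chosen parameters violate.
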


We point out that there are only $8$ exceptions for simple classical groups of dimension $\leq 7$. 
To our knowledge the only exceptions in higher dimension are $\Omega^+_8(2)$ and P$\Omega^+_8(3)$ (M. Vsemirnov, 2012). 
Furthermore, our generators were constructed in a uniform way starting from the generators provided in \cite{35} and following the inductive method described in \cite{PPT}. The idea is to construct the $(2,3)$-generators of dimension $n$ from those of dimension $n-1$. In fact, dimensions $6$ and $7$ can be considered the basis of this induction process for what concerns unitary, symplectic and odd dimensional orthogonal groups in any characteristic.
\smallskip

Throughout this paper, for any fixed power $q$ of the prime $p$,  $\F_{q}$ denotes the finite field of $q$ elements and $\F$ is its algebraic closure.  Furthermore, the set $\{e_1,e_2,\ldots,e_n\}$ is the canonical basis of
$\F^n$. For a description of the maximal subgroups of the low-dimensional classical groups
we refer to \cite{Ho}. Finally, $\omega$ is an element of $\F$ of order
$3$ if $p\neq 3$, $\omega=1$ otherwise.

 \section{The groups $\Sp(6,q)$, $q$ even}
 
In order to prove the $(2,3)$-generation of the groups $\Sp_6(q)$ for $q$ even, we first construct our pair $x,y$ of $(2,3)$-generators. Take $a\in \F_q^\ast$, with $q$ even, and define $H=\langle x,y\rangle$ where
\begin{equation}\label{genSp}
x=\left(\begin{array}{cccccc} 
 0 &  0 &  1 &  0 &  0 &  0 \\
 0 &  0 &  0 &  1 &  0 &  0 \\
 1 &  0 &  0 &  0 &  0 &  0 \\
 0 &  1 &  0 &  0 &  0 &  0 \\
 0 &  0 &  0 &  0 &  1 &  0 \\ 
 0 &  0 &  0 &  0 &  a &  1 
 \end{array}\right), \quad
y=\left(\begin{array}{ccccccc} 
 1 &  0 &  0 &  1 &  1 &  0 \\
 0 &  1 &  0 &  0 &  0 &  0 \\
 0 &  0 &  0 &  1 &  0 &  0 \\
 0 &  0 &  1 &  1 &  0 &  0 \\
 0 &  a &  1 &  1 &  1 &  1 \\
 0 &  a &  1 &  0 &  1 &  0 
\end{array}\right) .
 \end{equation}
Observe that $x^ 2=y^ 3=I$ and that $H\leq \Sp_6(q)$ since they fix the
following Gram matrix (that is, $x^ TJx=y^ TJy=J=J^T$):
$$J=\left(\begin{array}{ccccccc}
0 & 1 & 0 & 0 & 0 &0\\
1 & 0 & 0 & a & a+1 &  1\\
0 & 0 & 0 & 1 & 0   & 0 \\
0 & a & 1 & 0 & 1 & 1\\ 
0 & a+1 &  0 & 1 & 0 &1 \\
0 & 1 & 0 & 1 & 1 & 0 
     \end{array}\right).$$
 
The invariant factors of $x$ and $y$ are, respectively,
\begin{equation}\label{inv6}
(t^2+1), \ (t^2+1),\ (t^2+1)\quad\mbox{and}\quad (t^3+1),\ (t^3+1). 
\end{equation}
The minimum polynomial of $z=xy$ coincides with its characteristic polynomial
\begin{equation}\label{char6}
\chi_{z}(t)=t^6+(a+1)t^5 +t^4+ t^3+ t^2+(a+1)t+1.
\end{equation}

\begin{proposition}\label{irridSp}
The subgroup $H$ is absolutely irreducible.
\end{proposition}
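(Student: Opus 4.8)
The goal is to show that $H=\langle x,y\rangle$ acts absolutely irreducibly on $\F^6$. The standard strategy for this kind of statement is to exhibit a generating set of matrices and argue that no proper nontrivial subspace of $\F^6$ can be simultaneously invariant under all of them, after extending scalars to the algebraic closure $\F$. The single most useful piece of data we already have is the characteristic polynomial $\chi_z(t)$ of $z=xy$ in \eqref{char6}, together with the fact that it equals the minimum polynomial. The plan is to extract from the factorization type of $\chi_z$ enough constraints on any $H$-invariant subspace $W$ to force $W=0$ or $W=\F^6$.

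First I would analyze the element $z=xy$. Since its minimum polynomial equals its characteristic polynomial of degree $6$, the matrix $z$ is cyclic (nonderogatory): for each eigenvalue the corresponding eigenspace is one-dimensional, and more importantly $\F^6$ is a cyclic $\F[z]$-module. Consequently any $z$-invariant subspace $W$ is a sum of $\langle z\rangle$-cyclic pieces determined by a divisor of $\chi_z(t)$; in particular the $z$-invariant subspaces correspond bijectively to the monic factors of $\chi_z(t)$ over $\F$. An $H$-invariant subspace is in particular $z$-invariant, so I would study how the factors of $\chi_z$ over $\F$ can combine. The key subsidiary computation will be to determine the irreducible factorization of $\chi_z(t)$; if it is irreducible over $\F_q$ (hence a single Galois orbit over $\F$) we are essentially done, and if it splits into low-degree factors we must rule out the corresponding candidate subspaces one at a time.

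The second ingredient is to use $x$ or $y$ to destroy the remaining candidate invariant subspaces. From the invariant factors in \eqref{inv6}, $x$ acts with minimal polynomial $t^2+1=(t+1)^2$ in characteristic $2$, so $x$ is a transvection-like unipotent element with a large fixed space, while $y$ has order $3$ with minimal polynomial $t^3+1=(t+1)(t^2+t+1)$. I would take a candidate $H$-invariant $W$, i.e. a $z$-invariant subspace corresponding to a proper factor of $\chi_z$, and test it against $x$ and $y$: a proper $z$-cyclic subspace will generically fail to be $x$-invariant, and one computes the image $x(W)$ (or $y(W)$) directly to see it escapes $W$. Because $z=xy$, invariance under any two of $x,y,z$ gives invariance under the third, so it suffices to rule out common $\{z,x\}$-invariant subspaces.

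The main obstacle I anticipate is the factorization of $\chi_z(t)$ over $\F_q$, which depends on the parameter $a$ and on $q$: the polynomial is palindromic (self-reciprocal), so it factors through the substitution $s=t+t^{-1}$ into a cubic in $s$, and its splitting behaviour is genuinely $q$-dependent. Handling the several factorization types uniformly — especially the case where $\chi_z$ acquires low-degree factors for special $a$ or small $q$ — is where the real work lies, and I would organize the proof around a case distinction on that factorization, reducing each case to a short explicit check that $x$ (or $y$) does not preserve the candidate subspace.
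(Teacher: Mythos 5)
Your proposal is a plan rather than a proof, and the step where you locate all the work --- ``a short explicit check that $x$ does not preserve the candidate subspace'' for each factor of $\chi_z$ --- is precisely the step that does not go through as described. Over the algebraic closure $\F$ the polynomial $\chi_z(t)$ splits into six distinct linear factors (its discriminant is $1$, as noted later in the paper), so the $z$-invariant subspaces are exactly the $2^6$ spans of subsets of eigenvectors of $z$. Those eigenvectors are rational functions of the roots of a parametric sextic and cannot be written down explicitly in terms of $a$, so there is no finite list of concretely given candidate subspaces to test against $x$. Moreover, your remark that irreducibility of $\chi_z$ over $\F_q$ would make you ``essentially done'' is incorrect for \emph{absolute} irreducibility: a cyclic group generated by a matrix with irreducible characteristic polynomial is irreducible over $\F_q$ but visibly reducible over $\F$, and in general an $\F_q$-irreducible module can split over $\F$ into Galois-conjugate summands. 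The self-reciprocal substitution $s=t+t^{-1}$ does not rescue this, since the analysis must happen over $\F$ where every factorization type occurs.

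The paper's proof avoids the spectrum of $z$ entirely. It uses the operator $I+y+y^2$, whose image lies in the explicit $2$-dimensional subspace $U=\langle e_1,\, e_2+ae_6\rangle$: for an $H$-invariant $W\neq V$, either some $w\in W$ has $u=w+yw+y^2w\neq 0$, and then one writes $u$ in terms of at most two unknowns and shows by explicit determinant computations (with determinants $a$, $a^3(x_1+a+1)^2$, etc.) that the $H$-orbit of $u$ spans $V$, a contradiction; or else $(I+y+y^2)W=0$, which pins the coordinates of every $w\in W$ down to a small explicit form, and applying the same relation to $xw$, $xyxw$, $(xy)^2xw$ forces $w=0$. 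If you want to salvage your approach, you would need to replace the eigenvector analysis by something of this kind; as it stands the central case distinction is not carried out and cannot be carried out by the method you describe.
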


\begin{proof}
It suffices to prove the irreducibility of  $H$ viewed as a subgroup of $\Sp_6(\F)$ where $\F$ denotes the algebraic closure of $\F_q$ (so the parameter $a$ in \eqref{genSp} is an element of $\F^\ast$).
Suppose that $W\neq V$ is an $H$-invariant subspace of $V=\F^6$. 
Observe firstly that  for all $v \in V$ the vector $v+yv+y^2 v$ always belongs to the subspace 
$U=\langle e_1, e_2+a e_6\rangle$.  
So assume that, for some $w\in W$,  we have  $u=w+yw+y^2w \neq 0$ (hence $0\neq u\in W\cap U$).
We will show that this implies the absurd $W=V$.

Let $u=(x_1, x_2, 0,0,0,ax_2)^ T\neq 0$. Replacing, if necessary, $w$ by a suitable scalar 
multiple, without loss of generality, we may assume that either $(x_1,x_2)=(1,0)$ or $x_2=1$.
If $(x_1,x_2)=(1,0)$, then the matrix 
$M_1$, whose 
columns are
$$M_1  =\left(u  \mid xu \mid yxu  \mid xyxu \mid  (yx)^ 2u \mid x(yx)^ 2u \right),$$
has determinant $a\neq 0$ and hence the $H$-submodule generated by $u$ is the whole space $V$, 
contradicting the fact that it is contained in the proper subspace $W$.
If $x_2=1$, then, the matrix $M_2$, whose columns are
$$M_2  =\left(u  \mid xu \mid yxu  \mid xyx u \mid  xy^ 2x u \mid y^ 2 xu \right),$$
has determinant  $a^ 3(x_1+a+1)^ 2$ which is non-zero if $x_1\neq a+1$. In this case, as before, 
the $H$-submodule generated by $u$ is the whole space $V$, producing the contradiction $V\leq W$.
So assume further that $x_1=a+1$. In this case,
if $a$ has order $3$ we obtain the absurd $V=\langle u, xu, yxu, xy^ 2xu, (yx)^ 2u,$ $ (xy)^ 2yxu 
\rangle\leq W$  and for the other choices of $a\neq 0$ we get the absurd $V=\langle u, xu, yxu, y^ 
2xu, xy^ 2x u, yxy^ 2xu \rangle\leq W$.

Hence, we proved that  $W\cap U=\{0\}$ or, in other words, that all the elements $w$ of $W$ satisfy 
the condition
\begin{equation}\label{irrr}
w+yw+y^ 2w=0 
\end{equation}
It follows that every element $w$ of $W$ has shape $w=(x_1, 0, x_3, x_4, x_5, x_1+x_5)^ T$. 
Fix a such $w\in W$. Since also $xw\in W$ satisfies condition \eqref{irrr},
we obtain $x_4=0$ and $x_3=x_1+ax_5$. Next, considering $xyxw\in W$  we get $x_1=0$ and 
$(a^2+a+1)x_5=0$. Finally 
applying condition \eqref{irrr} to $(xy)^ 2 x w\in W$ we obtain $(a+1)x_5=0$, whence $x_5=0$ and so 
$w=0$ (i.e., $W=\{0\}$).
\end{proof}

Now, we want to analyze when $H$ is contained in a maximal subgroup of $\Sp_6(q)$. For a description of these subgroups, see \cite[Tables 8.28 and 8.29]{Ho}.

\begin{lemma}\label{monomialSp}
The subgroup $H$ is not monomial.
\end{lemma}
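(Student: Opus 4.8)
The plan is to show that $H$ is not monomial by exploiting the absolute irreducibility already established in Proposition~\ref{irridSp}. Recall that a subgroup $H \leq \GL(V)$ is monomial if there exists a basis of $V$ with respect to which every element of $H$ is represented by a monomial matrix, i.e.\ the group permutes a direct sum decomposition $V = V_1 \oplus \cdots \oplus V_6$ into $1$-dimensional subspaces (lines). Since $H$ is absolutely irreducible, the permutation action on these lines must be transitive; moreover the stabilizer of the decomposition is an imprimitivity structure, so $H$ would embed in the wreath product $\GL_1(\F) \wr S_6$, and in particular $H$ would admit a homomorphism onto a transitive subgroup of $S_6$ with abelian kernel (the diagonal torus). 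My strategy is to derive a contradiction from the existence of such a system of imprimitivity.

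**First I would** use the structure of the generators. The element $y$ has order $3$ with invariant factors $(t^3+1),(t^3+1)$ by \eqref{inv6}, so $y$ acts on the six lines as a permutation of order dividing $3$; being fixed-point-free is not forced, but its cycle type on the $6$ lines must be compatible with $y^3 = I$ and with the Jordan/rational canonical form. Similarly $x$ is an involution permuting the lines. The key constraint is the characteristic polynomial of $z = xy$ in \eqref{char6}: its shape, in particular the coefficient $a+1$ and the fact that the minimal polynomial equals the characteristic polynomial (so $z$ is cyclic/regular), strongly restricts the induced permutation $\bar z \in S_6$. I would compute the eigenvalue structure forced by a monomial representation: if $H$ were monomial, each generator's eigenvalues would be products of a root of unity (from the permutation part) with the diagonal scalars, and the trace and characteristic polynomial of $z=xy$ would have to be consistent with $z$ lying in the monomial group $\GL_1 \wr S_6$.

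**The cleanest route** is probably a direct combinatorial elimination: assume a system of imprimitivity $\{L_1,\dots,L_6\}$ of lines permuted by $H$, and analyze the induced transitive permutation group $\bar H \leq S_6$ generated by the involution $\bar x$ and the order-($\leq 3$) element $\bar y$. Transitivity together with the orders of the generators limits $\bar H$ to a short list of transitive $(2,3)$-generated subgroups of $S_6$ (such as $S_6$, $A_6$, $\PSL_2(5)\cong A_5$ acting transitively, $S_4$, $S_3 \times S_3$, $S_3 \wr S_2$, etc.). For each candidate permutation action I would check whether the lifted monomial matrices can reproduce the characteristic polynomial \eqref{char6} of $z=xy$; the decisive invariant is that $\chi_z(t)$ is irreducible or has a factorization incompatible with the block-cyclic structure a monomial $z$ must have. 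Concretely, if $\bar z$ has cycle type $\lambda = (\lambda_1,\dots)$ on the lines, then $\chi_z(t)$ factors as $\prod_i (t^{\lambda_i} - c_i)$ for scalars $c_i \in \F^\ast$; I would verify that no such product matches \eqref{char6}, which contains the nontrivial parametric coefficient $a+1$ in degrees $5$ and $1$ simultaneously---a pattern no single binomial-product factorization of a monomial matrix can produce.

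**The main obstacle** I anticipate is handling the parameter $a \in \F_q^\ast$ uniformly: the characteristic polynomial depends on $a$, and I must rule out \emph{every} value of $a$ (and every choice of imprimitivity system) at once, rather than case-by-case over a finite field. I expect the resolution to be that the joint appearance of $a+1$ in the degree-$5$ and degree-$1$ coefficients, contrasted with the coefficient $1$ in degrees $4,3,2$, forces an eigenvalue pattern that cannot arise from any block decomposition of $z$ into invariant subspaces of the monomial torus; alternatively, one may simply show that the fixed lines of the abelian normal subgroup (the diagonal) would have to form an $H$-invariant proper subspace, contradicting the absolute irreducibility of Proposition~\ref{irridSp} unless the monomial structure is trivial, which it cannot be in a faithful $6$-dimensional module. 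I would lead with whichever of these two arguments turns out to require the least computation.
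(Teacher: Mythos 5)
Your overall strategy is the same as the paper's: use the invariant factors \eqref{inv6} to constrain the permutations $\bar x,\bar y$ induced on the six lines, then show that the characteristic polynomial of a monomial $z=xy$ cannot equal \eqref{char6}. However, as written the argument has a genuine gap at its decisive step. You assert that no factorization $\chi_z(t)=\prod_i(t^{\lambda_i}-c_i)$ can reproduce \eqref{char6} because of the coefficient $a+1$ appearing in degrees $5$ and $1$; this is neither the right obstruction nor true in the stated generality. If the cycle type of $\bar z$ contains several fixed points (e.g.\ $(2,1,1,1,1)$ or $(1^6)$), the product $\prod_i(t^{\lambda_i}-c_i)$ is an essentially arbitrary split polynomial and can certainly have nonzero coefficients in degrees $5$ and $1$; likewise $(t^3-c_1)(t^2-c_2)(t-c_3)$ has both coefficients nonzero in general. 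To make the argument work you must first pin down the cycle types: the invariant factors \eqref{inv6} force $x$ to consist of three Jordan blocks $(t+1)^2$ and $y$ of two cyclic blocks $t^3+1$, so $\bar x$ is a product of three $2$-cycles and $\bar y$ of two $3$-cycles, both fixed-point-free (this is exactly the paper's first step). Only then is the cycle type of $\bar z=\bar x\bar y$ restricted to a short list (it is an odd permutation with at most three fixed points), and for each surviving type the comparison with \eqref{char6} — chiefly the fact that the coefficients of $t^4,t^3,t^2$ all equal $1$ while $a\neq 0$ — yields the contradiction. You sketch this restriction in your second paragraph but never carry it out, and without it the ``binomial-product'' claim fails.

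A second problem is your proposed fallback: the eigenlines of the diagonal normal subgroup do \emph{not} form a proper invariant subspace (they span $V$), and absolute irreducibility is entirely compatible with monomiality — indeed monomial subgroups of $\GL_6$ are typically absolutely irreducible, which is precisely why Proposition \ref{irridSp} cannot by itself rule out class $\mathcal{C}_2$ and a separate lemma is needed. So the second alternative you offer to ``lead with'' would not work at all, and the first alternative needs the missing cycle-type analysis plus the explicit coefficient comparison (the paper's three cases (i)--(iii)) to be a proof.
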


\begin{proof}
Let $\mathcal{B}=\{v_1, \dots\}$ be a basis on which $H$ acts monomially.
Considering transitivity and canonical forms \eqref{inv6}, the permutation induced
by $x$ is the product of three  $2$-cycles. Furthermore, we may assume 
$$v_2=yv_1, \ v_3=yv_2, \ v_4=xv_1, \ v_5=yv_4, \ v_6=yv_5.$$
We have the following cases.
\begin{itemize}
\item[(i)] $xv_2=\lambda v_5$ and $xv_3=\mu v_6$. In this case, $\chi_{xy}(t)=t^6+1$, in
contradiction with \eqref{char6}.
\item[(ii)] $xv_2=\lambda v_6$ and $xv_3=\mu v_5$. In this case $\chi_{xy}(t)=
t^6+ \frac{\lambda^2\mu+\mu^2+\lambda}{\lambda \mu} t^4
+\frac{\lambda \mu^2+\lambda^2+\mu}{\lambda \mu} t^2+1$.
Comparison with \eqref{char6} gives a contradiction.
\item[(iii)] $xv_2=\lambda v_3$ and $xv_5=\mu v_6$. In this case $\chi_{xy}(t)=
t^6+\frac{\lambda+\mu}{\lambda \mu} t^5+ \frac{1}{\lambda \mu} t^4+ (\lambda\mu)t^2+
(\lambda+\mu) t +1$. Once again, we obtain a contradiction with \eqref{char6}.
\end{itemize}
\end{proof}

\begin{lemma}\label{SOSp}
The subgroup $H$ is not contained in an orthogonal group
$\SO_6^\pm (q)$.
\end{lemma}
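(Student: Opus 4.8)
Since $q$ is even, in characteristic $2$ the orthogonal groups $\SO_6^\pm(q)$ sit inside $\Sp_6(q)$ as the isometry groups of the non-degenerate quadratic forms whose associated (polar) bilinear form is a non-degenerate alternating form. Hence $H\le\SO_6^\pm(q)$ would force $H$ to preserve some quadratic form $Q\colon V\to\F_q$. By Proposition \ref{irridSp} the group $H$ is absolutely irreducible, so by Schur's lemma the space of $H$-invariant bilinear forms is at most one-dimensional; as $J$ is such a form, the polar form of $Q$ must equal $\lambda J$ for some $\lambda\in\F_q^\ast$. Since $\O(Q)=\O(\lambda^{-1}Q)$, I may rescale and assume that the polar form of $Q$ is exactly $J$. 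The plan is therefore to show that no quadratic form polarizing to $J$ is fixed by both generators $x$ and $y$, which rules out $\SO_6^+(q)$ and $\SO_6^-(q)$ simultaneously, with no need to compute the Arf invariant.

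I would represent $Q$ by an upper triangular matrix $M$ with $M+M^T=J$; in characteristic $2$ this fixes the strictly upper triangular part of $M$ in terms of $J$ and leaves the diagonal $\diag(M)=(c_1,\dots,c_6)$, with $c_i\in\F_q$, as free parameters. For $g\in\Sp_6(q)$ the matrix $g^TMg+M$ is symmetric, since $g^T(M+M^T)g=g^TJg=J=M+M^T$; thus $g$ is an isometry of $Q$ precisely when $g^TMg+M$ has zero diagonal, that is, when $\diag(g^TMg)=\diag(M)$. Expanding the $i$-th diagonal entry of $g^TMg$ and using $M_{kl}+M_{lk}=J_{kl}$ for $k<l$ gives, for each generator, the six scalar conditions
\begin{equation*}
\sum_{k}g_{ki}^2\,c_k+\sum_{k<l}g_{ki}g_{li}J_{kl}=c_i,\qquad i=1,\dots,6,
\end{equation*}
an inhomogeneous linear system in the unknowns $c_1,\dots,c_6$ over $\F_q$ whose coefficients $g_{ki}^2$ are fixed elements of $\F_q$.

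The lemma would then follow by imposing these equations for $g=x$ and $g=y$ together and verifying that the combined system is inconsistent, so that no admissible diagonal $(c_1,\dots,c_6)$ exists and $H$ stabilizes no quadratic form with polar form $J$. I expect the main obstacle to be purely computational rather than conceptual: the coefficients depend on the parameter $a$, so I must check inconsistency uniformly for every $a\in\F_q^\ast$, treating separately, if necessary, the special value where $a$ has order $3$, exactly as occurred in the proof of Proposition \ref{irridSp}. Because the system is $\F_q$-linear in the $c_k$, ordinary Gaussian elimination applies and an explicit incompatible combination of the equations can be exhibited, completing the argument.
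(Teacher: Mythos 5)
Your reduction is sound and is, in substance, the same strategy the paper uses: invoke absolute irreducibility (Proposition \ref{irridSp}) to force the polar form of any $H$-invariant quadratic form $Q$ to be a nonzero scalar multiple of $J$, rescale so that $Q(u+v)+Q(u)+Q(v)=u^TJv$, and then show that no such $Q$ is fixed by both $x$ and $y$. Your isometry criterion $\diag(g^TMg)=\diag(M)$ and the resulting inhomogeneous linear system in the six diagonal unknowns $c_1,\dots,c_6$ are correctly derived.

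The gap is that you never actually solve that system. The entire content of the lemma is the inconsistency that you only ``expect'' to be able to verify; as written, the argument stops exactly where the proof has to begin, and this cannot be waved away as a routine finite check, since the coefficients involve the parameter $a$ ranging over $\F_q^\ast$ for arbitrary even $q$ and an explicit incompatible combination of the equations must be exhibited. For the record, the verification is short and uniform in $a$ (no case split on special values of $a$ is needed, contrary to what you anticipate): from $Q(xe_5)=Q(e_5)$ with $xe_5=e_5+ae_6$ one gets $a^2Q(e_6)+aJ_{56}=0$, hence $Q(e_6)=1/a$; from $ye_6=e_5$ one gets $Q(e_5)=Q(e_6)$; from $xe_1=e_3$ and $J_{13}=0$ one gets $Q(e_1+e_3)=0$; and then $Q(ye_4)=Q(e_4)$ with $ye_4=(e_1+e_3)+(e_4+e_5)$ forces $Q(e_5)=0$, contradicting $Q(e_5)=Q(e_6)=1/a\neq 0$. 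Until some such computation is carried out, your proposal is a correct plan rather than a proof.
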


\begin{proof}
Suppose that $H\leq \SO_6^\pm(q)$ and let $Q$ be a quadratic form fixed by $H$.
This means that for all $v\in \F_q^ 6$, $Q(xv)=Q(v)$ and $Q(yv)=Q(v)$.
Recall further that $Q(v_1+v_2)+Q(v_1)+Q(v_2)=v_1^T J v_2$. 

From  $Q(e_5)=Q(xe_5)=Q(e_5+a e_6)$,  $Q(e_6)=Q(ye_6)=Q(e_5)$ and $Q(e_1)=Q(xe_1)=Q(e_3)$  we get respectively $Q(e_6)=1/a$, $Q(e_5+e_6)=1$ and $Q(e_1+e_3)=0$.
Now,  $Q(e_4)=Q(ye_4)=Q((e_1+e_3)+(e_4+e_5))$ implies $Q(e_4)=Q(e_1+e_3)+Q(e_4+e_5)+1=Q(e_4)+Q(e_5)+1+1$, whence $0=Q(e_5)=Q(e_6)=1/a$, a contradiction.
\end{proof}

\begin{lemma}\label{G2Sp}
The subgroup $H$ is not contained in a subgroup $M$ isomorphic
to $G_2(q)$.
\end{lemma}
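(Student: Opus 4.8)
The plan is to derive a contradiction from the characteristic polynomial $\chi_z$ of $z=xy$ recorded in \eqref{char6}. Recall that $G_2(q)$ embeds in $\Sp_6(q)$ precisely because $q$ is even: it acts on a $6$-dimensional symplectic module (the relevant entry of \cite[Tables 8.28 and 8.29]{Ho}). So suppose, for a contradiction, that $H\leq M\cong G_2(q)$; then in particular $z\in M$, and I would work with the action of $z$ on this $6$-dimensional module over $\F$.

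The crucial input is the special shape of the spectrum of a $G_2$-element. On its natural $6$-dimensional module every element of $G_2(q)$ has eigenvalues $\alpha^{\pm1},\beta^{\pm1},\gamma^{\pm1}$ subject to the relation $\alpha\beta\gamma=1$; this is exactly what separates $G_2$ from a generic element of $\Sp_6$. Since $z\in\Sp_6(q)$ its characteristic polynomial is self-reciprocal, and as $t=1$ is not a root of \eqref{char6} its roots fall into genuine inverse pairs, so over $\F$ one has $\chi_z(t)=\prod_{i=1}^3(t^2+r_it+1)$ with pair-sums $r_i=\lambda_i+\lambda_i^{-1}$. Expanding this product shows that, in characteristic two, the coefficient of $t^5$ equals $\sigma_1:=r_1+r_2+r_3$ and the coefficient of $t^3$ equals $\sigma_3:=r_1r_2r_3$; comparison with \eqref{char6} then gives $\sigma_1=a+1$ and $\sigma_3=1$.

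It remains to convert the condition $\alpha\beta\gamma=1$ into a relation among the $r_i$. Expanding $r_1r_2r_3=(\alpha+\alpha^{-1})(\beta+\beta^{-1})(\gamma+\gamma^{-1})$ and using $\alpha\beta\gamma=1$, the two pure terms contribute $2$ and the six mixed terms collapse to $(\alpha^2+\alpha^{-2})+(\beta^2+\beta^{-2})+(\gamma^2+\gamma^{-2})=r_1^2+r_2^2+r_3^2-6$, whence $r_1r_2r_3=r_1^2+r_2^2+r_3^2-4$. Reducing modulo two this becomes the clean identity $\sigma_3=\sigma_1^{\,2}$. Substituting the values found above yields $1=(a+1)^2=a^2+1$, so $a=0$, contrary to $a\in\F_q^\ast$; hence $z\notin M$ and $H\not\leq G_2(q)$. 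The only real obstacle lies in the second and third steps, namely correctly pinning down the $G_2$-spectrum in even characteristic and reducing its defining constraint to the usable form $\sigma_3=\sigma_1^{\,2}$; once this is in place the comparison with \eqref{char6} is immediate and, since $\sigma_1,\sigma_3$ are read directly off the coefficients, no case analysis on $a$ or on the conjugacy class of $z$ is required.
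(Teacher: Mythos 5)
Your proof is correct and follows essentially the same route as the paper: both reduce to the fact that on the $6$-dimensional module an element of $G_2$ has eigenvalues $\alpha^{\pm1},\beta^{\pm1},(\alpha\beta)^{\mp1}$, which in characteristic $2$ forces the coefficient identity $f_3=f_1^2$ (your $\sigma_3=\sigma_1^2$), and comparison with \eqref{char6} then gives $(a+1)^2=1$, i.e.\ $a=0$, a contradiction. The only difference is one of presentation: the paper first notes that the discriminant of $\chi_z$ is $1$, so $z$ is semisimple and lies in a maximal torus of $G_2(\F)$ (citing Aschbacher for the torus description), whereas you invoke the eigenvalue pattern for an arbitrary element of $G_2$ --- which is also valid via the Jordan decomposition, though you should supply a reference for that spectral description.
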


\begin{proof}
We first observe that the  discriminant of the characteristic polynomial of $z=xy$ is $1$ and so
the eigenvalues of $z$ are pairwise distinct. Suppose that $H$ is contained in a subgroup $M\cong G_2(q)$.
We may embed $H$ in $\mathfrak{G}=G_2(\F)$ and by the previous observation $z$ is a semisimple element of
$\mathfrak{G}$, that is, it belongs to a maximal torus  of $\mathfrak{G}$. 
By  \cite{G2}, $z$ is conjugate to $s=\diag(\alpha,\beta,\alpha \beta,\alpha^ {-1},\beta^
{-1}, (\alpha\beta)^{-1})$ where $\alpha,\beta\in \F^\ast$.
Let $\chi_s(t)=t^ 6+\sum_{j=1}^ 5 f_j t^ j+1$ be the characteristic polynomial of $s$.
It is easy to see that $f_3=f_1^ 2$. Comparison with \eqref{char6} gives
$1=(a+1)^ 2$, that is $a=0$.
\end{proof}

\begin{theorem}\label{main6}
Take $x,y$ as in \eqref{genSp}, with $a\in \F_{q}^\ast$, $q$ even, such that
$\F_p[a]=\F_q$. Then $H=\langle x,y\rangle=\Sp_6(q)$. 
In  particular, the  groups $\Sp_6(q)$  are $(2,3)$-generated for all $q$.
\end{theorem}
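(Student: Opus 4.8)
The plan is to deduce the equality $H=\Sp_6(q)$ from Aschbacher's description of the maximal subgroups of $\Sp_6(q)$, as tabulated in \cite[Tables 8.28 and 8.29]{Ho}, by showing that $H$ is contained in none of them. Since $H$ is absolutely irreducible by Proposition \ref{irridSp}, it lies in no reducible subgroup, so the whole class $\mathcal{C}_1$ (parabolic subgroups and stabilizers of nondegenerate subspaces) is excluded at once. It then remains to run through the members of $\mathcal{C}_2$--$\mathcal{C}_8$ and $\mathcal{S}$ and rule each of them out, after which maximality forces $H=\Sp_6(q)$.

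Three of the relevant classes are already settled by the preceding results. The fully imprimitive (monomial) subgroups are excluded by Lemma \ref{monomialSp}; the orthogonal subgroups $\SO_6^\pm(q)$, which constitute class $\mathcal{C}_8$ in even characteristic, by Lemma \ref{SOSp}; and $G_2(q)$, the principal member of class $\mathcal{S}$ for $q$ even, by Lemma \ref{G2Sp}. In dimension $6$ the classes $\mathcal{C}_6$ and $\mathcal{C}_7$ are empty, so the subgroups still to be treated are the imprimitive $\Sp_2(q)\wr S_3$ and $\GL_3(q).2$ of class $\mathcal{C}_2$, the field-extension subgroups $\Sp_2(q^3).3$ and $\GU_3(q).2$ of class $\mathcal{C}_3$, the tensor subgroup $\Sp_2(q)\otimes \O_3(q)$ of class $\mathcal{C}_4$, the subfield subgroups $\Sp_6(q_0)$ of class $\mathcal{C}_5$, and any residual small member of class $\mathcal{S}$.

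For the classes $\mathcal{C}_2$, $\mathcal{C}_3$ and $\mathcal{C}_4$ I would again exploit the element $z=xy$. Its six eigenvalues are pairwise distinct, by the discriminant computation in the proof of Lemma \ref{G2Sp}, and its characteristic polynomial \eqref{char6} is palindromic with $\mathrm{tr}(z)=a+1$. In each of these subgroups a regular semisimple element must have its eigenvalues arranged in a forced multiplicative pattern: permuted in blocks for $\Sp_2(q)\wr S_3$ and $\GL_3(q).2$, conjugate under $\mathrm{Gal}(\F_{q^3}/\F_q)$ for $\Sp_2(q^3).3$, paired by the Hermitian form for $\GU_3(q).2$, and split as products $\lambda_i\mu_j$ of a symplectic and an orthogonal factor for $\Sp_2(q)\otimes \O_3(q)$. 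Imposing each pattern on \eqref{char6} yields, exactly as in Lemmas \ref{monomialSp} and \ref{G2Sp}, a polynomial identity in $a$ that forces $a=0$ or another inadmissible value, the desired contradiction. The subfield subgroups are excluded by the hypothesis $\F_p[a]=\F_q$: traces are invariant under conjugation and would all lie in $\F_{q_0}$ for a conjugate of $\Sp_6(q_0)$, whereas $\mathrm{tr}(z)=a+1$ already generates $\F_q$ over $\F_p$.

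The main obstacle is making the elimination exhaustive rather than merely illustrative: the eigenvalue constraints of classes $\mathcal{C}_3$ and $\mathcal{C}_4$ must be reconciled with the palindromic shape of \eqref{char6} for \emph{every} $a\in\F_q^\ast$, and class $\mathcal{S}$ must be shown to contribute nothing beyond $G_2(q)$. The latter is where I expect the real care to be needed; the key point is that in characteristic $2$ an absolutely irreducible symplectic representation of degree $6$ essentially forces the group to be $G_2(q)$, leaving only finitely many small-$q$ candidates to be dispatched by a direct order comparison or computation. Once all classes are excluded we obtain $H=\Sp_6(q)$; as $q$ is even the centre is trivial, so $\Sp_6(q)=\PSp_6(q)$ is simple, and since a value $a$ with $\F_p[a]=\F_q$ exists for every even $q$, the groups $\Sp_6(q)$ are $(2,3)$-generated for all even $q$, completing the proof.
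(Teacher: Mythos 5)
Your overall architecture coincides with the paper's: run through the Aschbacher classes of \cite[Tables 8.28, 8.29]{Ho}, killing $\mathcal{C}_1$ by Proposition \ref{irridSp}, $\mathcal{C}_5$ by the hypothesis $\F_p[a]=\F_q$ applied to the coefficient $a+1$ of \eqref{char6}, $\mathcal{C}_8$ by Lemma \ref{SOSp} and $G_2(q)$ by Lemma \ref{G2Sp}. The divergence, and the genuine gap, is in the classes you leave as a plan. The paper disposes of $\mathcal{C}_3$ together with $\mathcal{C}_1$ via absolute irreducibility and of all of $\mathcal{C}_2$ via Lemma \ref{monomialSp}, whereas you propose separate eigenvalue-pattern computations for $\Sp_2(q)\wr S_3$, $\GL_3(q).2$, $\Sp_2(q^3).3$, $\GU_3(q).2$ and a $\mathcal{C}_4$ tensor subgroup, and you never carry them out --- you say yourself that ``the main obstacle is making the elimination exhaustive rather than merely illustrative.'' As written, your argument therefore does not actually exclude $\mathcal{C}_2$, $\mathcal{C}_3$ or $\mathcal{C}_4$: asserting that imposing each eigenvalue pattern on \eqref{char6} ``yields a polynomial identity forcing $a=0$'' is a conjecture about the outcome of a computation, not the computation. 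The same applies to your treatment of $\mathcal{S}$, where ``an absolutely irreducible symplectic representation of degree $6$ essentially forces $G_2(q)$'' is not a proof.

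Two factual points would have shortened your list. For $q$ even the class $\mathcal{C}_4$ of $\Sp_6(q)$ is empty (the tensor factor $\GO_3(q)$ occurs only for $q$ odd in Table 8.28), and the only member of $\mathcal{S}$ is $G_2(q)'$, so Lemma \ref{G2Sp} already finishes that class; no residual small-$q$ analysis is needed. That said, your instinct that $\Sp_2(q)\wr S_3$ and $\GL_3(q).2$ require an argument beyond the literally monomial one of Lemma \ref{monomialSp} (which permutes six one-dimensional spaces, not three $2$-spaces or two $3$-spaces) is sound --- indeed the paper is terser here than one might like --- and your proposed tool, namely the block structure forced on the six distinct eigenvalues of $z$ (distinct because the discriminant of \eqref{char6} is $1$), is the right one; it simply has to be executed against \eqref{char6} for every $a\in\F_q^\ast$ rather than announced.
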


\begin{proof}
We already observed that $H\le \Sp_6(q)$. 
 Let $M$ be a maximal subgroup of $\Sp_6(q)$ which contains $H$. 
By Proposition \ref{irridSp}, $H$  is absolutely irreducible and so  $M\not\in\mathcal{C}_1\cup\mathcal{C}_3$.
Moreover $M$ cannot belong to $\mathcal{C}_2$  by  Lemma \ref{monomialSp}.
Since $a+1$ is  a coefficient of the characteristic polynomial of $z$, the assumption
$\F_p[a]=\F_{q}$ implies that the matrix $z$
cannot be conjugate 
to  elements of $\Sp_6(q_0)$ for any $q_0<q$. Thus $M\not\in \mathcal{C}_5$. 
By Lemmas \ref{SOSp} and  \ref{G2Sp}, $M$ cannot be in  $\mathcal{C}_8\cup \mathcal{S}$.
We conclude that $H=\Sp_6(q)$.
\end{proof}

\section{The $7$-dimensional classical groups}

In order to prove that the groups $\SU_7(q^ 2)$ and $\Omega_7(q)$ are $(2,3)$-generated, 
we provide  a pair of uniform generators. Consider the following matrices:
\begin{equation}\label{gen7}
x=\begin{pmatrix}
0& 1& 0& 0& 0& 0& a\\
   1& 0& 0& 0& 0& 0& a \\
   0& 0& 0& 1& 0& 0& 0 \\
   0& 0& 1& 0& 0& 0& 0 \\
   0& 0& 0& 0& 0& 1& -1 \\
   0& 0& 0& 0& 1& 0& -1\\
   0& 0& 0& 0& 0& 0& -1 
\end{pmatrix},\quad 
y=\begin{pmatrix}
1& 0& -1& 0& -1& 0& a+b-1 \\
 0& 0& -1& 0& 0& 0& 0 \\
     0& 1& -1& 0& 0& 0& 0 \\
      0& 0& 0& 0& -1& 0& 0 \\
     0& 0& 0& 1& -1& 0& 0 \\
     0& 0& 0& 0& 0& 0& -1 \\
      0& 0& 0& 0& 0& 1& -1
\end{pmatrix},
\end{equation}
where either
\begin{equation}\label{O}
b=a\in \F_q \quad \textrm{ and } \quad H\leq \SL_7(q)
\end{equation}
or
\begin{equation}\label{U}
b=a^ q\in \F_{q^ 2} \quad \textrm{ and } \quad H\leq \SL_7(q^ 2).
\end{equation}

The invariant factors of $x$ and $y$ are respectively
\begin{equation}\label{inv7}
(t+1),\ (t^2-1),\ (t^2-1),\ (t^2-1);\qquad (t^2+t+1),\ (t^2+t+1),\ (t^3-1),
\end{equation}
and the characteristic polynomial of $z=xy$ is
\begin{equation}\label{char7}
\chi_{xy}(t)=t^7-t^5+(1-a)t^ 4+(b-1)t^3+t^2-1. 
\end{equation}

\begin{lemma}\label{unitary7}
If $H$ is absolutely irreducible, then  the characteristic
polynomial $\chi_z(t)$ of $z$ coincides with its  minimum polynomial. Furthermore,
under hypothesis \eqref{O} we have $H\leq \SO_7(q)$ and under hypothesis \eqref{U} we obtain $H\leq \SU_7(q^
2)$.
\end{lemma}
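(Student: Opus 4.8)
The plan is to prove the three assertions in turn, invoking absolute irreducibility only where it is genuinely needed, namely to force the non-degeneracy and the uniqueness (up to a scalar) of the invariant form. For the statement on the minimal polynomial I would form $z=xy$ explicitly from \eqref{gen7} and show that $z$ is non-derogatory by producing a cyclic vector: one checks that for a suitable $v\in\F^7$ (say $v=e_1$) the vectors $v,zv,z^2v,\dots,z^6v$ are linearly independent for every admissible value of $a$, i.e. the $7\times 7$ matrix with these columns is invertible. Since $\deg\chi_z=7$ equals $\dim V$, the existence of a cyclic vector forces the minimal polynomial of $z$ to have degree $7$, hence to coincide with the characteristic polynomial \eqref{char7}. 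This step is a direct computation (alternatively one checks that $\chi_z$ is separable, exactly as was done for the symplectic case in Lemma~\ref{G2Sp}).

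For the second part I would exhibit a non-zero Gram matrix $J$ with $x^{\dagger}Jx=y^{\dagger}Jy=J$, where $\dagger$ denotes transposition under hypothesis \eqref{O} and conjugate-transposition with respect to the involution $c\mapsto c^{q}$ of $\F_{q^2}$ under hypothesis \eqref{U}; finding $J$ amounts to solving a linear system in its entries. The role of the two normalisations $b=a$ and $b=a^{q}$ is precisely to guarantee that a non-zero bilinear, respectively sesquilinear, solution exists, so that $V$ is self-dual in the appropriate sense. By Schur's lemma, absolute irreducibility forces the space of such forms to be at most one-dimensional, so $J$ is unique up to a scalar.

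It then remains to read off non-degeneracy and the type of $J$. The radical of $J$ is an $H$-invariant subspace and, since $J\neq 0$, it is proper; by absolute irreducibility it is therefore zero, so $J$ is non-degenerate. Under \eqref{O} the characteristic is odd (the relevant orthogonal groups are the $\Omega_7(q)$ with $q$ odd, since $\Omega_7(2^m)\cong\Sp_6(2^m)$ has been treated above), and $J^{T}$ defines another invariant form, whence $J^{T}=\lambda J$ with $\lambda=\pm1$; the value $\lambda=-1$ would make $J$ alternating, which is impossible for a non-degenerate form in the odd dimension $7$, so $J=J^{T}$ is symmetric and $H\le\O_7(q)$. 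As $H\le\SL_7(q)$ this gives $H\le\SO_7(q)$. Under \eqref{U} the same scalar argument lets us rescale $J$ to a non-degenerate Hermitian matrix, so $H\le\GU_7(q^2)$, and $H\le\SL_7(q^2)$ then yields $H\le\SU_7(q^2)$. The crux of the whole argument is the middle step, namely actually producing a non-zero invariant form, equivalently verifying the orthogonal (resp. unitary) self-duality of $V$ engineered by the choice $b=a$ (resp. $b=a^{q}$); once $J$ is in hand its non-degeneracy and its symmetric (resp. Hermitian) type follow with almost no further work from absolute irreducibility together with the parity of the dimension.
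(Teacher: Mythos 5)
Your route is genuinely different from the paper's, and parts of it work well, but the central step is left unproved. For the first claim the paper does not hunt for a cyclic vector: it computes $\dim C(x)=25$, $\dim C(y)=19$, $\dim C(z)=7$ and reads off from the Frobenius formula that $z$ has a single invariant factor. Your cyclic-vector computation is a legitimate substitute --- indeed, using \eqref{ei} one finds that the matrix $\left(e_1\mid ze_1\mid\cdots\mid z^6e_1\right)$ has determinant $\pm1$ independently of $a,b$, so $e_1$ really is a cyclic vector (and this does not even need irreducibility). Your parenthetical alternative, however, is not viable: $\chi_z$ is \emph{not} separable for all admissible parameters (under \eqref{O} its discriminant is $(a-1)(a-5)^3(27a^2-4a-148)^2$, which vanishes e.g.\ at $a=5$, a value the paper must handle separately in Lemma~\ref{SG2}), so separability cannot be the basis of the argument.

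The real gap is in the second part. Everything you say \emph{after} a nonzero invariant form $J$ is in hand --- uniqueness up to scalar by Schur, triviality of the radical by irreducibility, exclusion of the alternating type by the odd dimension, rescaling to a Hermitian matrix in the unitary case --- is standard and correct. But the existence of $J$ is exactly the content of the lemma, and you only assert that the linear system $x^{\dagger}Jx=y^{\dagger}Jy=J$ has a nonzero solution because $b=a$ (resp.\ $b=a^q$). Knowing that $x$, $y$, $z$ are each individually conjugate to their (conjugate-)inverse-transposes is not enough; one needs a \emph{simultaneous} conjugation, i.e.\ an isomorphism $V\cong V^{\ast}$ (resp.\ $V\cong \overline{V}^{\ast}$) of $H$-modules. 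The paper obtains this conceptually: the centralizer dimensions sum to $51=7^2+2$, so the triple $(x,y,z)$ is rigid, and \cite[Theorem 3.1]{PTV} then yields $H\leq\SO_7(q)$ or $H\leq\SU_7(q^2)$ with no matrix computation. To complete your version you would have to either actually solve the $7\times 7$ linear system and exhibit $J$ (verifying it is nonzero for all admissible $a$), or import the rigidity argument; as written, the crux you yourself identify is missing.
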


\begin{proof}
We have $\dim C(x)=25$, $\dim C(y)=19$ and $\dim C(z)=7$. From the Frobenius formula,
it follows that $z$ has a 
unique invariant factor, whence our first claim.
In particular, the triple $(x,y,z)$ is rigid (see \cite{PTV}) and by \cite[Theorem
3.1]{PTV} we obtain $H\leq \SO_7(q)$ when \eqref{O} holds and $H\leq \SU_7(q^2)$ when
\eqref{U} holds.
\end{proof}

\begin{proposition}\label{irr7}
Take $x,y$ as in \eqref{gen7}. Then the subgroup $H=\langle x,y \rangle$ is absolutely
irreducible if and only if the following conditions hold:
\begin{itemize}
\item[{\rm (i)}] $a^2-ab +b^ {2}+2a+2b+4=\prod_{j=1}^2 (b+\omega^j a-2\omega^{2j}) \neq 0$;
\item[{\rm (ii)}] $(a+b)^ 3 -8(a+b-2)^ 2 -8ab \ne  0$.
\end{itemize}
\end{proposition}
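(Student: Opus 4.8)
The plan is to prove both directions of the equivalence by exhibiting a concrete $H$-submodule that witnesses reducibility precisely when conditions (i) or (ii) fail, mirroring the submodule-chasing strategy already used in the proof of Proposition~\ref{irridSp}. The key observation is that the matrices in \eqref{gen7} were designed so that the averaging operator $v\mapsto v+yv+y^2v$ (which annihilates the order-$3$ part of $y$) and the projection onto the $(-1)$-eigenspace of $x$ land in small, explicit subspaces; the two factorized quantities in (i) and (ii) should arise as determinants of $7\times 7$ matrices built from orbits of a single well-chosen vector under words in $x$ and $y$. First I would compute, as in the symplectic case, a fixed vector $u$ lying in the image of $v\mapsto v+yv+y^2v$, and form the matrix whose columns are $u, xu, yxu, \ldots$ obtained by alternately applying $x$ and $y$; its determinant is a polynomial in $a,b$, and I expect it to be (up to a scalar) exactly the product in condition~(i). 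Vanishing of this determinant then means the cyclic $H$-module generated by $u$ is proper, giving reducibility, while non-vanishing forces the module to be all of $V=\F^7$.

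For the necessity of both conditions I would argue contrapositively: suppose $H$ is reducible and let $W$ be a proper nonzero $H$-invariant subspace. Using the invariant factors \eqref{inv7} I can control how $x$ and $y$ act on any invariant subspace — in particular the minimal polynomials of $x$ and $y$ on $W$ divide $t^2-1$ and $(t^2+t+1)(t^3-1)=(t-1)(t^2+t+1)^2$ respectively — and a dimension count restricts $\dim W$. The heart of the argument is that $W$ must contain a vector annihilated by one of the natural idempotent-like operators, and tracing which operator forces the relevant determinant to vanish. Condition~(ii), being a cubic expression $(a+b)^3-8(a+b-2)^2-8ab$, presumably governs a different, lower-dimensional invariant subspace (perhaps a line or plane fixed or permuted by both generators), so I would treat the two conditions via two separate candidate submodules and show that reducibility of $H$ forces at least one of the two determinants to be zero.

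The converse (sufficiency) requires checking that when both (i) and (ii) hold, $H$ acts irreducibly; I would do this by verifying that the cyclic modules attached to the two determinants are each the full space $V$ and that together they exhaust all possible $H$-invariant subspaces, so that no proper $W$ can survive. Concretely, after reducing (via scaling and the invariant-factor constraints) to finitely many normal forms for a hypothetical generator of $W$, I expect each case to produce a $7\times 7$ matrix of vectors whose determinant is a nonzero multiple of the expression in (i) or (ii); under the standing hypotheses these determinants are nonzero, yielding the contradiction $W=V$. Absolute irreducibility then follows because the whole analysis takes place over the algebraic closure $\F$, exactly as in Proposition~\ref{irridSp}.

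The main obstacle I anticipate is the bookkeeping needed to identify the right words in $x,y$ whose associated determinants factor as the stated products: the factorization in (i) as $\prod_{j=1}^2(b+\omega^j a - 2\omega^{2j})$ strongly suggests these determinants are symmetric under a cyclic substitution tied to $\omega$, and spotting the correct orbit of vectors to get precisely that symmetric polynomial (rather than some associate) is the delicate part. A secondary difficulty is ensuring that the case analysis for necessity is exhaustive — that every reducible configuration is caught by one of the two determinant conditions — which will rely crucially on the rigidity of the invariant factors \eqref{inv7} to bound the dimension and the $x$-, $y$-action on any invariant subspace.
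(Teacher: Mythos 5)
Your overall strategy (average over $\langle y\rangle$, then split according to the action of $x$, then chase linear constraints) is close in spirit to the paper's, but the central mechanism you propose is wrong for these generators. For the matrices \eqref{gen7} the operator $I+y+y^2$ maps all of $V$ into the line $\langle e_1\rangle$, and $e_1$ generates $V$ as an $H$-module \emph{unconditionally}, via $xe_1=e_2$, $ye_2=e_3$, $xe_3=e_4$, $ye_4=e_5$, $xe_5=e_6$, $ye_6=e_7$. So no $7\times 7$ orbit determinant in $a,b$ appears at this stage, and the product in (i) is not such a determinant. The real content of the sufficiency direction is the analysis of the subspace $\{w:\, w+yw+y^2w=0\}$, in which any proper submodule $W$ must lie: condition (i) emerges from the sub-case where $W$ sits inside the $(-1)$-eigenspace of $x$ (parametrize the shape of such $w$ and apply the relation to $yw$ and $y^2w$), and condition (ii) from the complementary sub-case (replace $w$ by $\overline w+x\overline w\neq 0$, solve the constraints obtained by applying the relation to $xyw$ and $(xy)^2w$, and observe that consistency forces expression (ii) to vanish). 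Your proposal does not identify this dichotomy, and the appeal to ``idempotent-like operators'' and a dimension count is too vague to recover it.

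More seriously, the ``only if'' direction is not established. Your only route to reducibility is ``vanishing of the determinant means the cyclic module generated by $u$ is proper,'' which is a non sequitur: singularity of one $7\times 7$ matrix formed from seven chosen words applied to $u$ does not show the cyclic module is proper (other words could still span $V$), and in any case here $u$ spans $\langle e_1\rangle$, which generates $V$ for every $a,b$. To prove that failure of (i) or (ii) forces reducibility one must exhibit explicit invariant subspaces: when $b=-\omega^j a+2\omega^{2j}$ there is a common eigenvector $w$ with $xw=-w$ and $yw=\omega^j w$, and when $(a+b)^3-8(a+b-2)^2-8ab=0$ there are explicit invariant subspaces of dimension $2$, $4$ or $6$ depending on $p$ and on whether $(a,b)=(2,2)$. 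Nothing in your plan produces these witnesses, so as written the equivalence would not be proved.
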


\begin{proof}
We consider the irreducibility of  $H$ viewed as a subgroup of $\SL_7(\F)$ (so the 
parameters $a,b$ in \eqref{gen7} are elements of $\F$).

First, assume that 
$b=-\omega^ja+2\omega^{2j}$ for some $j=1,2$ and consider the element
$w=\left(a+\omega^{2j},-\omega^{2j}, 1,-1, \omega^{j},\omega^{2j},-1\right)^T\neq 0$.
We have  $yw=\omega^{j}w$ and $xw=-w$. Thus $W=\langle w\rangle$ is a $1$-dimensional 
$H$-invariant subspace of $V=\F^7$.

Next, assume that 
$(a+b)^ 3 -8(a+b-2)^ 2 -8ab=0$.
If $p=2$, then $a=b$.  Taking $w=\left(1,1,1,1,1,1,0\right)^T$, 
the subspace $\langle w, yw, xyw, yxyw, (xy)^2w,
y(xy)^2w \rangle$ is $H$-invariant.
Assume $p\neq 2$. If $a=b=2$, then  consider
$w=\left(1,1,2,2,1,1,0  \right)^T$;
the subspace $\langle w, yw, xyw, yxyw \rangle $ is $H$-invariant.
If $(a,b)\neq (2,2)$, taking
$w=\left(x_1,x_1,x_2,x_2,x_3,x_3,0\right)^T$,
where $x_1=-\frac{(a+b)^2-6a-10b+16}{2}$, $x_2=
 2b-4$ and $x_3=  a+b-4$, we obtain that $w\neq 0$ and the subspace $\langle w,yw\rangle$ 
is $H$-invariant.

Now, assume that Conditions (i) and (ii) both hold and let $W\neq V$ be an $H$-invariant subspace 
of $V$. 
Straightforward calculation shows that, for all $v\in V$, the element
$v+yv+y^2v=(I+y+y^2)v$ always belongs to the subspace $\langle e_1\rangle$.
On the other hand, note that   
\begin{equation}\label{ei}
xe_1=e_2,\; ye_2=e_3, \; xe_3=e_4,\; ye_4=e_5,\; xe_5=e_6,\; ye_6=e_7. 
\end{equation}
It follows that if for some $w\in W$ we have $u=w+yw+y^2w\neq 0$, then the $H$-submodule generated 
by $u$ is the whole space $V$, in contradiction with the assumption $W\neq V$.
Hence, every element $w$ of $W$ 
satisfies the following condition: 
\begin{equation}\label{w}
w+yw+y^2w=0. 
\end{equation}
We will show that this condition implies $W=\{0\}$

\noindent \textbf{Case (a).} Suppose that $w+xw=0$ for all $w\in W$.
Then all vectors in $W$ have shape $( x_1, -a(x_5+x_6)-x_1, x_3, -x_3, x_5, x_6, x_5+x_6)^T$. 
We fix a non-zero $w\in W$. 
From  $yw+xyw=0$ and $y^2w+xy^2w=0$ we see that expression (i) must be $0$, a contradiction. 

\medskip

\noindent\textbf{Case (b).} There exists $\overline w\in W$ such that $\overline w+x\overline w\ne 0$.
Then $\overline w+x\overline w =w$ has shape $(x_1, x_1,x_3, x_3, x_5,x_5,0)^ T$.
Condition \eqref{w} gives $2x_3=2x_1+cx_5$, with $c=a+b-2$.
Suppose $p=2$.  If $c=0$ (that is $a=b$),  expression (ii) is $0$, a contradiction.
If $c\neq 0$, then $x_5=0$ and application of \eqref{w} to $xyw$ and to
$(xy)^2w$, which are in $W$, 
leads to $W=\{0\}$.
Thus $p\ne 2$, $x_3=x_1+\frac{c}{2}x_5$.
After this substitution, Condition \eqref{w} applied to $xyw$ gives
\begin{equation}\label{x1}
(c-2)x_1=\left(-\frac{1}{2}\left(a +b\right)^ 2 + 3a + 5b - 8\right)x_5.
\end{equation}
Assume $c= 2$ (that is $b=4-a$). Then $(a-2)x_5=0$.
If $a=2$ (and so $b=2$), then expression (ii) is $0$, a contradiction.
If $a\neq 2$, we get $x_5=0$.
In this case, applying \eqref{w} to $(xy)^2w\in W$, we obtain  $W=\{0\}$.
Assume  $c\neq 2$. Using \eqref{x1} to eliminate $x_1$ and applying \eqref{w} to $(xy)^2w$,
we see that expression (ii) must be $0$, a contradiction. 
\end{proof}

\begin{lemma}\label{diag}
Assume $H$ absolutely irreducible. If  $(xy)^k$ is  a diagonal matrix, then $k\geq 13$.
\end{lemma}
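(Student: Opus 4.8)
The plan is to reduce the statement first to a claim about the order of $z=xy$ modulo scalars, and then to a short residue computation modulo the characteristic polynomial $\chi_z$ of \eqref{char7}. The key observation is that, for this pair $(x,y)$, a \emph{diagonal} power of $z$ is automatically a \emph{scalar} power. Indeed, by Lemma \ref{unitary7} the matrix $z$ is non-derogatory, and one checks directly from \eqref{gen7} that $e_1,ze_1,\dots,z^6e_1$ are linearly independent, so $e_1$ is a cyclic vector for $z$. Now suppose $(xy)^k=z^k$ is diagonal. Its first column is $z^ke_1=\lambda e_1$, where $\lambda$ is the $(1,1)$-entry. Since $z^k$ commutes with $z$, for every $j$ we get $z^k(z^je_1)=z^j(z^ke_1)=\lambda\,z^je_1$; as the vectors $z^je_1$ ($0\le j\le 6$) form a basis of $V$, this forces $z^k=\lambda I$. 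Hence it suffices to show that $z^k\neq\lambda I$ for every scalar $\lambda$ and every $k$ with $1\le k\le 12$.

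Next I would translate $z^k=\lambda I$ into $\chi_z(t)\mid t^k-\lambda$, equivalently $t^k\equiv\lambda$ in $\F[t]/(\chi_z(t))$, using that the minimum polynomial of $z$ equals $\chi_z$ (Lemma \ref{unitary7}). Since $\deg\chi_z=7$, divisibility already forces $k\ge 7$, so only the six values $7\le k\le 12$ remain to be excluded. For each of these I would reduce $t^k$ modulo $\chi_z(t)$ and point to a coefficient in degrees $1$ through $6$ that cannot vanish. For $k=7,8,10$ the coefficient of $t^2$ or of $t$ equals $\pm 1$; for $k=9$ the simultaneous vanishing of the coefficients of $t^6$ and $t^4$ would require $a=1$ and $a=2$; for $k=11$ the coefficient of $t$ is $a-1$, and in the remaining subcase $a=1$ the coefficient of $t^6$ equals $-1$. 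None of these can be made to vanish, so $t^k$ is non-constant for $7\le k\le 11$.

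The main obstacle is the case $k=12$, and this is exactly the point where the absolute irreducibility of $H$ must be used. The reduction of $t^{12}$ modulo $\chi_z$ has coefficient $2-a$ in degree $2$ and coefficient $2-b$ in degree $1$, so the only way for $t^{12}$ to be constant is $a=b=2$. But substituting $a=b=2$ into Condition (ii) of Proposition \ref{irr7} gives $(a+b)^3-8(a+b-2)^2-8ab=64-32-32=0$, so for these parameters $H$ fails to be absolutely irreducible, contrary to hypothesis. Therefore $t^{12}$ is non-constant as well, whence $z^k\neq\lambda I$ for all $k\le 12$, and $k\ge 13$ follows.

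Finally I would keep track of small characteristics while reducing the $t^k$, verifying that the distinguished coefficients ($\pm1$, $a-1$, $2-a$, $2-b$) remain the relevant ones. This causes no trouble under the standing hypotheses: for the orthogonal case \eqref{O} one has $q$ odd, while in the unitary case \eqref{U} one may assume $a\neq 0$, which keeps the degree-$2$ coefficient of $t^{12}$ nonzero in characteristic $2$. Together these observations complete the argument that $(xy)^k$ diagonal implies $k\ge 13$.
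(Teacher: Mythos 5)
Your proof is correct, but it takes a genuinely different route from the paper. The paper argues directly on the matrices: for each $4\le k\le 12$ it exhibits an explicit off-diagonal entry of $D(k)=(xy)^k$ (e.g.\ $D(k)_{3,2}=\pm1$ for $k=4,5,6$, $D(10)_{3,4}=D(11)_{3,2}=D(12)_{3,1}=(2-a)(a+2b-5)$, etc.) and runs a case analysis on the parameters to show one such entry is always nonzero, invoking Proposition \ref{irr7}(ii) to exclude $a=b=2$ in the last cases. You instead first upgrade ``diagonal'' to ``scalar'' via the observation that $e_1$ is a cyclic vector for $z$, then work in $\F[t]/(\chi_z(t))$: this kills $k\le 6$ for free by a degree count and replaces twelfth powers of $7\times 7$ matrices by reductions of $t^k$ modulo a degree-$7$ polynomial, with irreducibility needed only at $k=12$ (where the coefficients $2-a$ and $2-b$ force $a=b=2$, which annihilates expression (ii) of Proposition \ref{irr7} in every characteristic, including $p=2$ where it reads $(a+b)^3$). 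I checked your distinguished coefficients and they are right. The one step you should not leave as ``one checks directly'' is the linear independence of $e_1,ze_1,\dots,z^6e_1$: it is the load-bearing hinge of your reduction, and it does hold for \emph{all} $a,b$ and all characteristics --- modulo $\langle e_1,e_2,e_4,e_6\rangle=\langle e_1,ze_1,z^2e_1,z^3e_1\rangle$ the vectors $z^4e_1,z^5e_1,z^6e_1$ reduce to $-e_5-e_7$, $e_3+e_5$, $-e_3-e_5-e_7$, whose determinant in the coordinates $e_3,e_5,e_7$ is $-1$ --- so you should display that computation (note it also makes your appeal to Lemma \ref{unitary7} for non-derogatoriness redundant). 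Your closing paragraph about small characteristics is unnecessary once this is in place, since every distinguished coefficient argument ($\pm1\ne 0$; $a-1$ and $a-2$ cannot vanish simultaneously; $a=b=2$ contradicts irreducibility) is characteristic-free. A minor trade-off: the paper's proof establishes the literally stronger statement that $(xy)^k$ is not even diagonal by pure entry inspection, whereas yours routes through scalarity; both are fine for the intended application in Lemma \ref{class S7} and Theorem \ref{main7}.
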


\begin{proof}
For $k\leq 3$ our claim follows from \eqref{ei}. Let $D(k)=(xy)^k$.  For $k=4,5,6$ we have
$D(k)_{3,2}=\pm 1$ and for $k=7$ we have $D(k)_{3,1}=1$.
If $k=8$, we obtain  $D(8)_{3,1}=a-2$, but for $a=2$ we get $D(8)_{7,2}=-1$.
For $k=9$, $D(9)_{7,1}=-2a+3$ and hence $p\neq 2$. 
In this case,  $a=\frac{3}{2}$ yields $D(9)_{7,5}=-1$.
For $k=10,11,12$,  we have $D(10)_{3,4}=D(11)_{3,2}=D(12)_{3,1}=(2-a)(a+2b-5)$. 
Assume first $a=2$. For $k=10,12$, we obtain $D(10)_{5,3}=D(12)_{6,4}=2-b$, whence
$b=2$. However, by Proposition \ref{irr7}(ii), $H$ is reducible.
For $k=11$ we have $D(11)_{7,1}=2(b-2)$. If $b=2$, as before, $H$ is reducible and if
$p=2$ we get $D(11)_{2,1}=1$.
Assume now $a=5-2b$. In this case, $D(10)_{3,5}=1-b$, however $b=1$ leads to $D(10)_{4,1}=7$ and $D(10)_{4,7}=5$. Moreover,  $D(11)_{3,4}=D(12)_{3,2}=(b-2)(9b-13)$. We can exclude $b=2$ (which implies $a=1$), since this produces the contradiction $D(11)_{1,4}=D(12)_{1,4}=1$. So, take $9b=13$ ($p\neq 3$). We obtain the contradiction $D(11)_{3,5}=D(12)_{5,7}=\frac{1}{9}$.
\end{proof}

We want now to prove that $H$, when absolutely irreducible,
is not contained in a maximal subgroup of $\Omega_7(q)$ or of $\SU_7(q^ 2)$.
We refer to  \cite[Tables 8.37, 8.38, 8.39 and 8.40]{Ho}.

\begin{lemma}\label{monomial7}
Assume $H$ absolutely irreducible. Then $H$ is not monomial.
\end{lemma}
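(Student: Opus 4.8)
The plan is to mimic the structure of Lemma \ref{monomialSp}, exploiting the invariant factors \eqref{inv7} of $x$ and $y$ together with the characteristic polynomial \eqref{char7} of $z=xy$. Suppose, for contradiction, that $H$ acts monomially on some basis $\mathcal{B}=\{v_1,\dots,v_7\}$ of $V=\F^7$, permuting its lines $\langle v_i\rangle$. The associated permutation representation $\pi\colon H\to S_7$ determines, via the cycle structure, the possible block shapes of $x$ and $y$ in this basis. From the invariant factors in \eqref{inv7}, $x$ has rational canonical form with one block $(t+1)$ and three blocks $(t^2-1)$, so $x$ has eigenvalues $1,1,1,-1,-1,-1,-1$ and $\pi(x)$ is an involution with exactly three transpositions and one fixed line. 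Likewise $y$ has invariant factors $(t^2+t+1)^2(t^3-1)$, so $y^3=I$ with eigenvalue $1$ of multiplicity one, forcing $\pi(y)$ to be a product of $3$-cycles; since $7=3+3+1$, $\pi(y)$ must have exactly two $3$-cycles and one fixed line.

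First I would fix normalizations using \eqref{ei}, which already records the orbit structure $e_1\mapsto e_2\mapsto\cdots\mapsto e_7$ under the alternating action of $x$ and $y$; this suggests choosing the labelling so that the fixed line of $\pi(x)$ and the two $3$-cycles of $\pi(y)$ are pinned down. Concretely I would set $v_2=yv_1$, $v_3=y^2v_1$ as one $3$-cycle of $y$ and $v_5=yv_4$, $v_6=y^2v_4$ as the other, with $v_7$ the fixed line of $\pi(y)$; then I would use the transpositions of $\pi(x)$ to relate these, writing $xv_i=\lambda_i v_{\sigma(i)}$ for appropriate scalars. Because $\pi(x)$ has one fixed point and $\pi(y)$ has one fixed point, the combinatorics of how the three $x$-transpositions interleave the two $y$-triples is quite constrained: one enumerates the finitely many ways the involution and the two $3$-cycles can generate a transitive (by irreducibility) subgroup of $S_7$. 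For each surviving combinatorial type I would compute $\chi_{xy}(t)$ as a polynomial in the monomial scalars $\lambda_i$ and compare with \eqref{char7}.

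The key mechanism, exactly as in the symplectic case, is that a monomial product $xy$ has a characteristic polynomial whose coefficients are constrained: its nonzero off-diagonal contributions come only from the cycles of $\pi(xy)=\pi(x)\pi(y)$, so $\chi_{xy}(t)$ is (up to sign) a product of factors $t^{\ell}-(\text{product of scalars around each cycle})$ of the composite permutation. In particular $\chi_{xy}(t)$ must factor according to the cycle type of $\pi(xy)$, typically yielding a sparse polynomial (a product of binomials $t^{\ell_r}-c_r$). I would then observe that \eqref{char7}, namely $t^7-t^5+(1-a)t^4+(b-1)t^3+t^2-1$, is \emph{dense}: it has nonzero coefficients in degrees $7,5,4,3,2,0$, which cannot be matched by such a sparse binomial-product shape for any choice of scalars. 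Matching the middle coefficients $1-a$ and $b-1$ against the rigid pattern forces $a=b=\dots$ in a way incompatible with the absolute-irreducibility Conditions (i) and (ii) of Proposition \ref{irr7}, giving a contradiction in every case.

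The main obstacle I anticipate is the case enumeration: with a fixed-point-free-on-one-line involution and two $3$-cycles on $7$ points there are several distinct ways the permutations $\pi(x),\pi(y)$ can intertwine, and for each the induced cycle type of $\pi(xy)$ (hence the factored shape of $\chi_{xy}$) differs. Unlike the dimension-$6$ symplectic situation of Lemma \ref{monomialSp}, where three transpositions and two $3$-cycles on $6$ points gave only the three cases (i)--(iii), here the extra fixed lines of both $\pi(x)$ and $\pi(y)$ create more subcases to track, and one must also rule out configurations that fail to be transitive (those contradict absolute irreducibility directly). The saving grace is that in each case the comparison with the dense polynomial \eqref{char7} is immediate once $\chi_{xy}$ is written down, so the real work is organizing the finitely many intertwining patterns rather than any deep computation.
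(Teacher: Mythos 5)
There is a genuine gap, and it sits at the very step where the paper's proof lives. You assert that because the eigenvalue $1$ of $y$ has multiplicity one, $\pi(y)$ is ``forced'' to be a product of two $3$-cycles and a fixed line. This inference is backwards. Since $y^3=I$, the product of the monomial scalars around any $3$-cycle of $\pi(y)$ equals $1$, so each $3$-cycle block is similar to the regular representation of $\Z/3\Z$ and contributes \emph{every} cube root of unity, in particular the eigenvalue $1$, once. Two $3$-cycles would therefore give eigenvalue $1$ multiplicity at least $2$ (and, more to the point, two invariant factors equal to $t^3-1$), which is exactly what \eqref{inv7} forbids: the invariant factors $(t^2+t+1),(t^2+t+1),(t^3-1)$ admit only one summand isomorphic to $\F[t]/(t^3-1)$. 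What actually forces the two-$3$-cycle shape is not the spectrum of $y$ but \emph{transitivity} of the monomial action (required by irreducibility): an involution $\pi(x)$ moves at most $6$ points via at most $3$ transpositions, and if $\pi(y)$ had at most one $3$-cycle the group $\langle\pi(x),\pi(y)\rangle$ could merge the $7$ points into at most $7-(3+2)=2$ orbits, never one. The paper's proof is precisely: transitivity forces two $3$-cycles, and two $3$-cycles contradict \eqref{inv7}. Your argument derives the two-$3$-cycle configuration by an invalid step and then fails to notice that this configuration is already a contradiction, with no need to look at $z=xy$ at all.

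The second problem is that the remainder of your plan is not a proof but a programme. The enumeration of interleaving patterns of the transpositions of $\pi(x)$ with the $3$-cycles of $\pi(y)$, the computation of $\chi_{xy}$ in each case, and the comparison with \eqref{char7} are all deferred (``I would compute\dots'', ``I would then observe\dots''). Moreover the claimed punchline --- that the ``dense'' polynomial \eqref{char7} can never match a product of binomials $t^{\ell_r}-c_r$ --- is not true without further work: for instance, if $\pi(xy)$ had cycle type $5+2$ then $(t^5-c_1)(t^2-c_2)$ has nonzero coefficients exactly in degrees $7,5,2,0$, matching the support of \eqref{char7} when $a=b=1$, and one must invoke Proposition \ref{irr7}(ii) to exclude this. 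So even granting the (wrongly derived) combinatorial setup, the case analysis is both unexecuted and harder than you suggest, whereas the correct route closes the lemma in two lines.
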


\begin{proof}
Let $\{v_1, \ldots, v_7\}$ be a basis on which $H$ acts monomially and transitively.
We may then assume that $yv_1=v_2$, $yv_2=v_3$, $yv_3=v_1$, $yv_4=v_5$, $yv_5=v_6$,
 $yv_6=v_4$. However, this is in contradiction with the invariant factors \eqref{inv7} of $y$.
\end{proof}

\begin{lemma}\label{class S7}
Assume $H$ absolutely irreducible. Then $H$ is not conjugate to
any subgroup of type $Z\times \PSU_3(3)$, where $Z$ consists of scalar matrices. 
\end{lemma}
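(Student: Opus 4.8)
The plan is to reduce modulo scalars and exploit that $\PSU_3(3)$ is a \emph{fixed} finite group, of exponent $168=2^3\cdot 3\cdot 7$, independent of $q$ and of the parameters $a,b$. Since the characteristic polynomial of $z=xy$ is a conjugacy invariant, I may assume that $H$ itself is contained in $M=Z\times S$, where $S\cong\PSU_3(3)$ acts absolutely irreducibly on $V=\F^7$ and $Z$ consists of scalar matrices. As $S$ is simple with trivial centre, its $7$-dimensional representation is faithful and its image meets the scalars only in $I$; thus $M=Z\times S$ is an internal direct product and each $h\in H$ factors uniquely as $h=\zeta(h)\,s(h)$ with $\zeta(h)\in Z$ and $s(h)\in S$.

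First I would pin down the scalar parts. Writing $x=\zeta_x x_S$ and $y=\zeta_y y_S$, the relations $x^2=y^3=I$ give $\zeta_x^2 x_S^2=I$ and $\zeta_y^3 y_S^3=I$; since $x_S^2,y_S^3$ lie in the image of $S$, which meets the scalars trivially, we get $x_S^2=y_S^3=I$ together with $\zeta_x^2=\zeta_y^3=1$. On the other hand every scalar in $\SL_7$ satisfies $\lambda^7=1$, so each element of $Z$ has order dividing $7$, hence coprime to $6$. Therefore $\zeta_x=\zeta_y=1$, so in fact $x,y\in S$ and $z=xy\in S$. Consequently the order of $z$ divides the exponent $168$ of $\PSU_3(3)$, and its minimal polynomial divides $t^{168}-1$. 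By Lemma~\ref{unitary7} the characteristic polynomial of $z$ coincides with its minimal polynomial, so $\chi_z(t)\mid t^{168}-1$; being the characteristic polynomial of an element of the fixed group $S$ in its $7$-dimensional representation, $\chi_z(t)$ must belong to the finite list of characteristic polynomials of the conjugacy classes of $\PSU_3(3)$.

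The contradiction then comes from matching this finite list against the rigid shape of \eqref{char7}. There the coefficients are free only in degrees $4$ and $3$: one has $[t^6]=0$, $[t^5]=-1$, $[t^2]=1$, $[t^1]=0$, $[t^0]=-1$ (so in particular $\tr{z}=0$ and $\chi_z(1)=b-a$), while $[t^4]=1-a$ and $[t^3]=b-1$. I would run through the characteristic polynomials of the classes of $S$, discard all those failing to match these five fixed coefficients, and for each survivor read off the forced values $a=1-[t^4]$ and $b=1+[t^3]$; one then checks that every such pair $(a,b)$ violates condition~(i) or~(ii) of Proposition~\ref{irr7}, contradicting the standing hypothesis that $H$ is absolutely irreducible.

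The main obstacle is the explicit bookkeeping of this finite comparison: it requires the eigenvalue patterns of the classes of $\PSU_3(3)$ in the relevant $7$-dimensional representation in every characteristic, including the modular cases $p\in\{2,3,7\}$ dividing $|\PSU_3(3)|$, where the representation and the element orders need separate care. One must also distinguish the rational $7$-dimensional representation (the $G_2$-type one, relevant to the orthogonal case \eqref{O}, for which $1$ is always an eigenvalue and hence $\chi_z(1)=b-a=0$ forces $a=b$) from the complex $7$-dimensional representations relevant to the unitary case \eqref{U}, which must be handled on their own. Apart from this finite verification, the argument is entirely structural.
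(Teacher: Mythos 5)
Your structural reduction is fine, but the proof is not finished: everything is ultimately delegated to ``run through the characteristic polynomials of the classes of $\PSU_3(3)$'' in its $7$-dimensional representations in every characteristic (including the modular cases $p=2,3,7$) and check that each surviving pair $(a,b)$ violates Proposition~\ref{irr7}. That unexecuted bookkeeping is where the entire content of the lemma lives, and it is not a routine formality --- you would need the Brauer characters of $\PSU_3(3)\cong G_2(2)'$ in each relevant characteristic, and you would also have to worry about whether a matching characteristic polynomial, if one exists, really forces a violation of (i) or (ii) rather than being excluded for some other reason. As written, the argument is a plan for a proof, not a proof.

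The irony is that your own scalar analysis already puts the conclusion within one line of reach via a lemma you did not invoke. You correctly show that the scalar parts of $x$ and $y$ are trivial (their orders divide both $7$ and $6$), so $z=xy$ lies in the image of $\PSU_3(3)$ and hence has order in $\{1,2,3,4,6,7,8,12\}$; in particular $z^k=I$ for some $k\leq 12$. But Lemma~\ref{diag} states that if $(xy)^k$ is diagonal and $H$ is absolutely irreducible then $k\geq 13$. This is an immediate contradiction, with no need for any eigenvalue patterns, for the shape of \eqref{char7}, or for Lemma~\ref{unitary7}. This is exactly the paper's proof (it does not even bother with the scalar analysis: it is enough that some power $z^k$ with $k\le 12$ is scalar, hence diagonal). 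I recommend you replace the entire characteristic-polynomial comparison by this appeal to Lemma~\ref{diag}.
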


\begin{proof}
It suffices to notice that elements of $\PSU_3(3)$ have order $1,2,3,4$, $6,7,8$
or $12$ and apply Lemma \ref{diag}.
\end{proof}

\begin{lemma}\label{S62}
Assume hypothesis \eqref{O} and $H$ absolutely irreducible. Then $H$ is not contained in a subgroup $M$ isomorphic
to $\Sp_6(2)$.
\end{lemma}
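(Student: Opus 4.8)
The plan is to combine the order bound of Lemma \ref{diag} with a trace computation in the $7$-dimensional orthogonal module. First I would note that $\Sp_6(2)$ is simple with trivial centre, so the putative inclusion $H\le M\cong \Sp_6(2)\le \SL_7(q)$ is faithful and the matrix order of $z=xy$ equals its order in the abstract group. The element orders of $\Sp_6(2)$ are $1,2,3,4,5,6,7,8,9,10,12$ and $15$, the only one exceeding $12$ being $15$. Since $z^{|z|}=I$ is diagonal, Lemma \ref{diag} forces $|z|\ge 13$, and therefore $|z|=15$. The whole argument then reduces to excluding order-$15$ elements.

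Next I would pin down the trace of an order-$15$ element $g$ of $\Sp_6(2)$ on this module. Its eigenvalues are roots of unity of order dividing $15$, closed under inversion (orthogonality), with product $1$ (as $\det z=1$), and producing order exactly $15$. Since the $7$-dimensional character is rational-valued, these eigenvalues split into complete Galois orbits; as $\phi(15)=8>7$ no primitive $15$th root can occur, and the only surviving configuration has characteristic polynomial $\Phi_1\Phi_3\Phi_5=(t-1)(t^2+t+1)(t^4+t^3+t^2+t+1)=t^7+t^6+t^5-t^2-t-1$, so $\tr{g}=-1$. On the other hand, the coefficient of $t^6$ in $\chi_z$ from \eqref{char7} is $0$, whence $\tr{z}=0$. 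As $0\ne-1$ in every characteristic, this already settles all $p\notin\{3,5\}$: there $z$ is $p$-regular and $\tr{z}$ is the reduction mod $p$ of $\tr{g}=-1$, a contradiction.

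The hard part will be the two characteristics $p\in\{3,5\}$ dividing $15$, where $z$ is no longer semisimple. Here I would use the Jordan decomposition $z=z_pz_{p'}$: the eigenvalues of $z$ coincide with those of its semisimple part $z_{p'}$, so $\tr{z}$ equals the trace of a genuine $p'$-element. For $p=3$ the part $z_{p'}=z_5$ has order $5$, and the same orbit analysis forces its characteristic polynomial to be $\Phi_5(t-1)^3$, of trace $2$, so $\tr{z}=2\ne 0$ in $\F_3$. For $p=5$ the part $z_{p'}=z_3$ has order $3$, with characteristic polynomial one of $\Phi_3(t-1)^5,\ \Phi_3^2(t-1)^3,\ \Phi_3^3(t-1)$, of respective traces $4,1,-2$, none of which vanishes modulo $5$; hence $\tr{z}\ne 0$ in $\F_5$. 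In every characteristic we obtain $\tr{z}\ne 0$, contradicting $\tr{z}=0$, and therefore $H$ cannot be contained in a copy of $\Sp_6(2)$. The only delicate points to verify carefully are that the $7$-dimensional character is indeed rational-valued (so the Galois-orbit argument applies) and that the trace of the relevant semisimple part is uniquely determined in the small characteristics; both reduce to the short eigenvalue bookkeeping above.
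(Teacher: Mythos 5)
Your argument is correct, and it reaches the conclusion by a genuinely different route from the paper. The paper's proof is a short computation: it invokes the fact that in $\Sp_6(2)$ the commutator of an involution with an element of order $3$ has order $7$, so $[x,y]^7$ would have to be the identity, and then checks that the $(7,1)$ entry of $[x,y]^7$ equals $(a-2)^2(1-a)$, which cannot vanish because $a=1,2$ make $H$ reducible by Proposition \ref{irr7}(ii); its cost is that the commutator fact is a rather special piece of information about $\Sp_6(2)$ requiring its own verification. You instead use Lemma \ref{diag} to force $|xy|\ge 13$, hence $|xy|=15$ since $15$ is the only element order of $\Sp_6(2)$ exceeding $12$, and then exclude order $15$ by comparing $\tr{xy}=0$ (read off from \eqref{char7}) with the possible traces of (the semisimple part of) an order-$15$ element on a faithful $7$-dimensional module; the case split over $p\mid 15$ via Jordan decomposition is handled correctly, and the candidate traces $-1$, $2$ and $4,1,-2$ are all right and all nonzero in the relevant characteristics. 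What your approach buys is that it only consumes ATLAS-level data about $\Sp_6(2)$ (its element-order spectrum) together with a lemma the paper has already proved. The one point you should nail down rather than merely flag is the Galois-stability of the eigenvalue multisets: since you are in characteristic $p$, appealing to rationality of ``the $7$-dimensional character'' is not quite the right formulation; what you actually need is that $g\sim g^k$ for all $k$ coprime to $|g|$, which for orders $15$ and $5$ follows simply from $\Sp_6(2)$ having a unique conjugacy class of elements of each of these orders, while for the order-$3$ semisimple part the inversion-stability you already get from $H$ preserving a nondegenerate form suffices to pair each primitive cube root with its conjugate. With that justification made explicit, your proof is complete.
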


\begin{proof}
Firstly, notice that the commutator between two elements of $\Sp_6(2)$ of order respectively $2$ and $3$ has order $7$.
So, if $H\leq M$, then $[x,y]$ must have order $7$.
Set $D=[x,y]^ 7$. Since we are assuming $b=a$, we obtain $D_{7,1}=(a-2)^ 2(1-a)$.
However, for $a=1,2$ the subgroup is reducible by Proposition \ref{irr7}(ii).
\end{proof}

We can now prove the $(2,3)$-generation of $\SU_7(q^ 2)$.

\begin{proposition}\label{U72}
The group $\SU_7(4)$ is $(2,3)$-generated. 
\end{proposition}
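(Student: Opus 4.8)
The uniform pair \eqref{gen7} cannot be used for this group. Indeed, for $\SU_7(4)$ we have $q=2$, so the parameter $a$ ranges over $\F_4$ and $b=a^q=a^2$. In characteristic $2$ the left-hand side of Proposition \ref{irr7}(i) reduces to $(b+\omega a)(b+\omega^2a)=a^2(a+\omega)(a+\omega^2)$, and since $a^3=1$ for every $a\in\F_4^\ast$, this expression vanishes precisely when $a\in\{\omega,\omega^2\}$, so that $H$ is reducible for those values. The only remaining choice $a=1$ forces $b=1$, and then $x,y$ have all their entries in $\F_2$; the preserved form is defined over $\F_2$ as well, so $H$ lands in a proper ($\mathcal C_5$-type) subgroup defined over the subfield. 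Hence no admissible value of $a$ in \eqref{gen7} produces $\SU_7(4)$, and this group must be handled separately: this is exactly why it is isolated in a proposition of its own.

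The plan is therefore to exhibit an explicit \emph{ad hoc} pair $x,y\in\SU_7(4)$ with $x^2=y^3=I$ and to verify directly that $\langle x,y\rangle=\SU_7(4)$. First I would fix a generator $\xi$ of $\F_4^\ast$ and write down two $7\times 7$ matrices over $\F_4$ of orders $2$ and $3$ whose entries genuinely generate $\F_4$ over $\F_2$. Next I would check that both preserve a fixed non-degenerate Hermitian form $J$, i.e. $\overline{x}^{\,T}Jx=\overline{y}^{\,T}Jy=J$, where the bar denotes the field automorphism $t\mapsto t^2$; this guarantees $H=\langle x,y\rangle\le\SU_7(4)$. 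Since $|\SU_7(4)|$ is a fixed explicit number, the equality $H=\SU_7(4)$ can then be certified by a direct computation in \textsf{GAP} or \textsf{Magma}: one computes the order of the matrix group $H$ and checks it equals $|\SU_7(4)|$.

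If a more self-contained argument is wanted, I would instead run the Aschbacher-class analysis by hand for the chosen $x,y$, as in the proof of Theorem \ref{main6}. This means verifying absolute irreducibility (ruling out $\mathcal C_1\cup\mathcal C_3$), checking that the preserved form is Hermitian rather than bilinear (so that $H$ avoids the $\SO_7$- and $\Sp_6$-type members), using the characteristic polynomial and the order of $z=xy$ together with an analogue of Lemma \ref{diag} to exclude the imprimitive class $\mathcal C_2$ (compare Lemma \ref{monomial7}) and the subfield class $\mathcal C_5$, and finally eliminating the relevant members of $\mathcal S$, notably the $\PSU_3(3)$-type subgroup treated in Lemma \ref{class S7}.

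The main obstacle is not the generation itself but the breakdown of the uniform construction at $q=2$: once a valid pair is written down, the proof that $H=\SU_7(4)$ is routine, and fastest by machine. The two delicate points to watch are (a) confirming that the chosen matrices really preserve a non-degenerate Hermitian form, so that $H$ sits inside the unitary group and not merely in $\SL_7(4)$, and (b) excluding the subfield subgroups of class $\mathcal C_5$, since $\F_4$ is so small that it is easy to accidentally land in a group defined over $\F_2$; this is precisely the reason the entries of $x$ and $y$ must be chosen to generate $\F_4$ over $\F_2$.
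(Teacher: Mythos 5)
Your diagnosis of why the uniform pair \eqref{gen7} breaks down at $q=2$ is essentially correct (for $a\in\F_4^\ast$ condition (i) of Proposition \ref{irr7} forces $a=1\in\F_2$, which is excluded), and your overall strategy --- exhibit an \emph{ad hoc} pair of elements of orders $2$ and $3$ preserving a non-degenerate Hermitian form, then rule out maximal overgroups --- is exactly the shape of the paper's argument. However, what you have written is a plan rather than a proof: no matrices are exhibited, and neither of your two proposed verification routes (a machine order computation, or a full Aschbacher-class analysis) is actually carried out. The entire mathematical content of the statement lies in producing a concrete pair and completing the verification, and that content is missing.

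You should also note the device the paper uses to make the maximal-subgroup step almost immediate, which is much lighter than the case-by-case $\mathcal{C}_1,\dots,\mathcal{C}_8,\mathcal{S}$ analysis you sketch. After writing down explicit $x,y\in\SU_7(4)$ with $x^Tx^\sigma=y^Ty^\sigma=I$, the paper observes that the word $g=(xy^2xy)^2(xy^2)^3$ has order $43$. Since $43$ is a primitive prime divisor of $2^{14}-1$ (note $2^7+1=3\cdot 43$), the only maximal subgroup of $\SU_7(4)$ whose order is divisible by $43$ is the $\mathcal{C}_3$-subgroup $M=\frac{2^7+1}{3}{:}7$ of order $7\cdot 43$; as this order is odd, $M$ cannot contain the involution $x$, so $H=\SU_7(4)$. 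Your route would eventually work, but without this primitive-prime-divisor shortcut you would have to justify irreducibility, imprimitivity, subfield and $\mathcal{S}$-class exclusions separately for your (as yet unspecified) matrices, which is considerably more work and is not done in the proposal.
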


\begin{proof}
Take the following two matrices of $\SL_7(4)$:
$$x=\left(\begin{array}{ccccccc} 
0 &  1 &0&0&0&0&0\\
1 & 0 &0&0&0&0&0\\
0 & 0 &1&0&0&0&0\\
0 & 0 &0&1&1&0&\omega\\
0 &0 &0&1&0&\omega^2&\omega\\
0 & 0&0&0&\omega&1&\omega^2\\
0 &0&0&\omega^2&\omega^2&\omega&0
       \end{array}\right),\qquad
y=\left(\begin{array}{ccccccc} 
1&0&0&0&0&0&0\\
0&0&0&1&0&0&0\\
0&1&0&0&0&0&0\\
0&0&1&0&0&0&0\\
0&0&0&0&0&0&1\\
0&0&0&0&1&0&0\\
0&0&0&0&0&1&0
\end{array}\right).$$
Then $x^2=y^3=1$. Moreover, $x^ T x^ \sigma=y^ T y^ \sigma=I$ ($\sigma$ is the
automorphism of $\SL_7(q^2)$ defined as $(\alpha_{i,j})\mapsto (\alpha_{i,j}^q)$)
and so $H=\langle
x,y\rangle\leq \SU_7(4)$. 
Assume that $H$ is
contained in some maximal subgroup $M$ of $G$. Since $g=(xy^2xy)^ 2(xy^2)^ 3$ has order $43$, then $g$ can
be contained only in a maximal subgroup of
class $\mathcal{C}_3$: $M=\frac{2^7+1}{3}:7$. However,  $|M|=7\cdot 43$ and so $x \not \in
M$.
Hence, $H=\SU_7(4)$.
\end{proof}

\begin{theorem}\label{main7}
Take $x,y$ as in  \eqref{gen7} with $a\in \F_{q^2}\setminus \F_q$ and suppose that
\begin{itemize}
\item[{\rm (i)}]$ a^ {2q}-a^ {q+1}+a^2+2a^ q+2a+4\neq 0$;
\item[{\rm (ii)}] $(a+a^q)^3 -8(a+a^q-2)^2-8 a^{ q+1} \neq 0$, when $p$ is odd;
\item[{\rm(iii)}]  $\F_{q^2}=\F_p[a^7]$.
\end{itemize}
Then $H=\langle x,y\rangle=\SU_7(q^2)$. Moreover, if $q^2\neq 2^2$, then there exists
$a\in \F_{q^2}^\ast$ satisfying conditions {\rm (i)} to {\rm (iii)}. In particular, the
groups $\SU_7(q^2)$ and $\PSU_7(q^2)$ are $(2,3)$-generated for all $q$.
\end{theorem}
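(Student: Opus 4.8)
The plan is to follow the Aschbacher-class elimination strategy already set up for the symplectic case in Theorem \ref{main6}, now applied to $\SU_7(q^2)$. First I would observe that by Lemma \ref{unitary7} the hypotheses guarantee $H \leq \SU_7(q^2)$ (the form is fixed because the triple $(x,y,z)$ is rigid), so it remains to show that no maximal subgroup $M$ of $\SU_7(q^2)$ can contain $H$. I would invoke Proposition \ref{irr7}: conditions (i) and (ii) are precisely the irreducibility conditions (restated for the unitary case with $b=a^q$), so $H$ is absolutely irreducible, which immediately rules out the reducible classes $\mathcal{C}_1$ and the imprimitive-by-tensor-type obstructions, and rules out $\mathcal{C}_2$ via Lemma \ref{monomial7}. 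Since $7$ is prime, the tensor-product and tensor-induced classes $\mathcal{C}_4, \mathcal{C}_7$ are vacuous, and $\mathcal{C}_3$ (field-extension subgroups of degree a prime dividing $7$) forces a degree-$7$ extension, which I would exclude either by the order/eigenvalue structure of $z$ or by Lemma \ref{diag}.

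Next I would handle the subfield class $\mathcal{C}_5$ exactly as in the symplectic proof: condition (iii), namely $\F_{q^2}=\F_p[a^7]$, ensures that the entries of $z$ (or a suitable power, in view of Lemma \ref{diag}) generate the full field, so $z$ cannot be conjugate into $\SU_7(q_0^2)$ for any proper subfield, eliminating $\mathcal{C}_5$. The class $\mathcal{C}_6$ (normalizers of symplectic-type $r$-groups) requires the dimension to be a prime power with the relevant congruence; for dimension $7$ this is the extraspecial case $7^{1+2}$, whose normalizer has order bounded by $|\mathrm{GL}_2(7)|\cdot 7^3$ or so, and I would rule it out by producing an element of $H$ (for instance a power of $z$ or a short word in $x,y$) whose order exceeds what this normalizer permits, again leaning on Lemma \ref{diag}.

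The real work is in the class $\mathcal{S}$ of almost simple irreducible subgroups. The relevant candidates in dimension $7$ over a unitary group are essentially $\PSU_3(3)$ (handled by Lemma \ref{class S7}, which uses the element-order restriction together with Lemma \ref{diag}) and the symplectic subgroup $\Sp_6(2)$-type obstruction. I expect the main obstacle to be ensuring completeness of this $\mathcal{S}$-list for all $q$ simultaneously and checking each survivor; the $\Sp_6(2)$ case is dispatched by Lemma \ref{S62} through the order of the commutator $[x,y]$, but one must confirm from \cite[Tables 8.37--8.40]{Ho} that no further $\mathcal{S}$-subgroup survives the irreducibility and field-of-definition constraints. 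Once every Aschbacher class is excluded, I conclude $H=\SU_7(q^2)$.

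Finally, for the existence of a suitable parameter $a$ I would count: conditions (i), (ii) each exclude $O(q)$ values of $a$ and condition (iii) excludes those $a$ with $a^7$ lying in a proper subfield, a set of size $O(q)$ as well; since $|\F_{q^2}^\ast|=q^2-1$ grows quadratically, for all $q$ with $q^2\neq 4$ the number of admissible $a$ is positive, so such an $a$ exists. The case $q^2=4$ is exactly the group $\SU_7(4)$ settled separately in Proposition \ref{U72}. Passing to the projective quotient gives the $(2,3)$-generation of $\PSU_7(q^2)$ for all $q$, completing the proof.
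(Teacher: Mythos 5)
Your overall strategy is the same as the paper's: Lemma \ref{unitary7} places $H$ in $\SU_7(q^2)$, Proposition \ref{irr7} with $b=a^q$ gives absolute irreducibility (killing $\mathcal{C}_1$ and $\mathcal{C}_3$), Lemma \ref{monomial7} kills $\mathcal{C}_2$, the coefficient $a^q-1$ of $\chi_z$ together with (iii) kills $\mathcal{C}_5$, Lemma \ref{diag} (via the fact that $z^6$ is not scalar and \cite[Lemma 2.3]{35}) kills $\mathcal{C}_6$, and Lemma \ref{class S7} kills $\mathcal{S}$. Two remarks on that part: in the unitary case the only $\mathcal{S}$-candidate from \cite[Tables 8.37, 8.38]{Ho} is the $\PSU_3(3)$ type, so your worry about $\Sp_6(2)$ is misplaced --- and in any case Lemma \ref{S62} is stated only under hypothesis \eqref{O}, so you could not have invoked it here.

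The genuine gap is in your existence argument for $a$. You claim that since the excluded sets have size $O(q)$ while $|\F_{q^2}^\ast|=q^2-1$ grows quadratically, an admissible $a$ exists for every $q$ with $q^2\neq 4$. But the implied constants matter: conditions (i) and (ii) exclude up to $2q+3q=5q$ elements, and one also needs $a\notin\F_q$ and $\F_p[a^7]=\F_{q^2}$, so the count $q^2-1>5q+(\text{subfield losses})$ fails for small $q$ (already for $q=3$ one has $q^2-1=8<15=5q$). The paper makes the counting rigorous only for $q\geq 16$, where it shows there are at least $5q+1$ elements of order $q^2-1$ (such elements automatically satisfy (iii)), and for $3\leq q\leq 13$ it exhibits explicit minimum polynomials for $a$; the case $q=2$ is Proposition \ref{U72}, as you say. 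Without either the sharpened count for moderate $q$ or the explicit table for small $q$, your proof of the ``moreover'' clause is incomplete.
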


\begin{proof}
By Conditions (i) and (ii), $H$ is absolutely irreducible. 
 From Lemma \ref{unitary7} it follows that $H\le \SU_7(q^2)$. 
Let $M$ be a maximal subgroup of $\SU_7(q^2)$ which contains $H$. 
Since $H$  is absolutely irreducible,  $M\not\in\mathcal{C}_1\cup\mathcal{C}_3$.
Moreover $M$ cannot belong to $\mathcal{C}_2$  by  Lemma \ref{monomial7}.
Since $z^6$ is not scalar by Lemma \ref{diag}, we may apply \cite[Lemma 2.3]{35}  to deduce that $M\notin\mathcal{C}_6$.
We have $\F_p[a^{7q}]=\F_p[a^7]=\F_{q^2}$, by (iii). 
Since $a^q-1$ is  a coefficient of the characteristic polynomial of $z$, the matrix $z$
cannot be conjugate 
to any element of $\SL_7(q_0)Z$, with $q_0< q^2$ and $Z$ the center of $\SU_7(q^ 2)$.
Thus $M\not\in \mathcal{C}_5$. 
Finally,  $M$ cannot be in class $\mathcal{S}$ by Lemma \ref{class S7} (recall that we are taking $a\neq a^ q$).
We conclude that $H=\SU_7(q^2)$.

As to the existence of some $a$ satisfying all the assumptions when $q>2$, any element
of $\F_{q^2}^ \ast$ of order $q^2-1$  satisfies (iii) by Lemma 2.3 of \cite{35}. The
elements in $\F_{q^2}^\ast$ which do not satisfy either (i) or (ii)  are
at most  $3q+2q=5q$.  If $q \geq 16$,  there are at least 
$5q+1$ elements in $\F_{q^2}^ \ast$ having order $q^2-1$ (use \cite[Lemma 2.1]{35} for
$q\geq 127$ and direct computations otherwise), whence the existence of
$a$.
For $3\leq q\leq 13$, we may take $a$ with minimum polynomial $m_a(t)$
over $\F_p$ as in the following table.

\begin{center}
\begin{tabular}{cc|cc|cc}
$q$ & $m_a(t)$ & $q$ & $m_a(t)$ &$q$ & $m_a(t)$   \\\hline
$3,13$ & $t^2+t-2$ & $4,9$ & $t^4+t^3-1$  & $5,11$ & $t^2+t-3$  \\
$7$ & $t^2+t+3$ & $8$ & $t^6+t+1$\\
\end{tabular}
\end{center}
Then $a$ satisfies (i), (ii) and (iii) and hence $H=\SU_7(q^2)$ for all $q> 2$.
\end{proof}

We now prove the $(2,3)$-generation of $\Omega_7(q)$ for $q$ odd.

\begin{lemma}\label{inOm}
Assume hypothesis \eqref{O} with $q$ odd. Then $H$ is  contained in $\Omega_7(q)$ if and
only if $a-1$ is a square in $\F_q^\ast$.
\end{lemma}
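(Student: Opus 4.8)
The plan is to reduce the membership $H\le\Omega_7(q)$ to a single spinor-norm computation. By Lemma \ref{unitary7} (under hypothesis \eqref{O}) we already know $H\le\SO_7(q)$, so $H$ preserves a non-degenerate symmetric bilinear form $B$ with Gram matrix $J$, which by absolute irreducibility is unique up to a scalar. For $q$ odd the group $\Omega_7(q)$ is precisely the kernel of the spinor norm $\theta\colon\SO_7(q)\to\F_q^\ast/(\F_q^\ast)^2$. Since $\theta$ is a homomorphism onto a group of order $2$ and $y^3=I$, we have $\theta(y)=1$; hence $H=\langle x,y\rangle\le\Omega_7(q)$ if and only if $\theta(x)=1$. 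This is the key reduction.

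Next I would compute $\theta(x)$. As $x$ is an involution and $p\ne 2$, the eigenvalues $\pm1$ split $V=\F_q^7$ as $V=U\oplus U^\perp$, where $U=\ker(x+I)$ is the $(-1)$-eigenspace; reading off the explicit matrix \eqref{gen7}, one finds that $U$ is cut out by the relations $x_1+x_2+ax_7=0$, $x_3+x_4=0$ and $x_5+x_6=x_7$, so that $\dim U=4$. Because $B(u,w)=B(xu,xw)=-B(u,w)$ for $u\in U$ and $w\in U^\perp$, the spaces $U$ and $U^\perp$ are $B$-orthogonal and therefore non-degenerate. Writing $x$ as the product of the reflections in an orthogonal basis of $U$, the spinor norm equals the discriminant of $B|_U$; that is, $\theta(x)\equiv\det(J|_U)\pmod{(\F_q^\ast)^2}$, where $J|_U$ denotes the Gram matrix of $B$ on any basis of $U$. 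Crucially, this square class is independent of the normalization of $J$: rescaling $J$ by $\lambda$ multiplies $\det(J|_U)$ by $\lambda^4\in(\F_q^\ast)^2$, so the ambiguity in $J$ is harmless.

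It then remains to evaluate $\det(J|_U)$. First I would pin down $J$ by solving the linear system $x^TJx=y^TJy=J$ for symmetric $J$, whose solution space is one-dimensional by irreducibility. Using the explicit basis of $U$ coming from the three relations above, I would form the $4\times4$ Gram matrix $J|_U$ and compute its determinant; the outcome is $(a-1)$ times a square in $\F_q^\ast$. Consequently $\theta(x)=1$ if and only if $a-1\in(\F_q^\ast)^2$, which combined with $\theta(y)=1$ yields the claimed equivalence. The only genuine work is the two explicit linear-algebra steps---determining the invariant form $J$ and evaluating the $4\times4$ determinant---both routine once the $(-1)$-eigenspace has been parametrized; all the conceptual content sits in the reduction of the first paragraph.
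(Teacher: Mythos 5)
Your proposal is correct and follows essentially the same route as the paper: reduce to the spinor norm of $x$ (since $y$ is a square, its spinor norm is automatically trivial) and evaluate it on the $(-1)$-eigenspace $\ker(x+I)=\mathrm{Im}(x-I)$, obtaining $a-1$ up to squares. The paper phrases the computation via the Wall form on $\mathrm{Im}(x-I)$ rather than the discriminant of $B$ restricted to that subspace, but for an involution these differ by a square factor, so the two calculations are interchangeable.
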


\begin{proof}
In Proposition \ref{unitary7}, we already proved that $H\leq \SO_7(q)$. Furthermore, since
$y=y^{-2}$, we have that $H\leq \Omega_7(q)$ if and only if $x\in \Omega_7(q)$ if and only
if the spinor norm of $x$ is a square in $\F_q^\ast$ (for example, see \cite[Theorem 11.51]{Ta}).
Set $V_x=Im(x-Id)$. It is easy to see that $\{e_1-e_2, e_3-e_4, e_5-e_6,
-a(e_1+e_2)+e_5+e_6+2e_7 \}$ is a basis of $V_x$. The Wall form of $x$ (see \cite[page 153]{Ta}) with respect to this
basis is given by the following matrix 
$$\begin{pmatrix}
-4 & 0 & 2a & 4-4a \\ 0 & -2a & 0 & 4-2a \\ 2a & 0 & -4 & 2a^2-2a \\ 4-4a & 4-2a &
2a^2-2a & 4a-4-2a^2  
  \end{pmatrix}
$$
whose determinant is $16(a-2)^ 2 (a-1) ( a+2)^ 2$. By \cite[page 163]{Ta} $x \in
\Omega_7(q)$ if and only if $a-1$ is a square in $\F_q^\ast$.
\end{proof}

\begin{lemma}\label{SG2}
Assume  hypothesis \eqref{O} and  $q$ odd. If $a\neq 1,2$ then $H$ is not contained in a subgroup
isomorphic to $G_2(q)$.
\end{lemma}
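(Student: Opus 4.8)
The plan is to show that the absolutely irreducible subgroup $H\leq\SO_7(q)$ cannot sit inside a copy of $G_2(q)$ by comparing the conjugacy data of the element $z=xy$ with the constraints imposed by the $7$-dimensional representation of $G_2$. The key structural fact I would exploit is the same one used in Lemma \ref{G2Sp}: the seven-dimensional module of $G_2$ over $\F$ has a very rigid semisimple structure. Concretely, a semisimple element of $\mathfrak{G}=G_2(\F)$ acts on the $7$-dimensional module with eigenvalues of the form $1,\alpha,\alpha^{-1},\beta,\beta^{-1},\alpha\beta,(\alpha\beta)^{-1}$ for suitable $\alpha,\beta\in\F^\ast$, so that its characteristic polynomial $\chi_s(t)=t^7+\sum_{j=1}^{6}f_jt^j-1$ satisfies a relation among the symmetric functions (the $7$-dimensional representation forces the eigenvalue multiset to be stable under inversion, with one eigenvalue equal to $1$).

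First I would record from \eqref{char7} (specialized to $b=a$, hypothesis \eqref{O}) that
\[
\chi_z(t)=t^7-t^5+(1-a)t^4+(a-1)t^3+t^2-1,
\]
and check whether $z$ is semisimple: by Lemma \ref{unitary7} the characteristic and minimum polynomials of $z$ coincide, so $z$ is regular, and I would verify that its eigenvalues are distinct (e.g.\ by checking the discriminant of $\chi_z$ is nonzero, exactly as in Lemma \ref{G2Sp}) so that $z$ is a regular semisimple element and hence lies in a maximal torus of any $G_2(\F)$ containing it. The characteristic polynomial of $z$ is palindromic in the sense forced by $\chi_z(t)=-t^7\chi_z(1/t)$ up to the correct sign, reflecting that the eigenvalue set is inversion-stable with a distinguished eigenvalue $1$ (indeed $\chi_z(1)=1-1+(1-a)+(a-1)+1-1=0$, so $1$ is an eigenvalue, consistent with the $G_2$ torus description).

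Next I would impose the defining $G_2$ relation among the coefficients: writing $s=\diag(1,\alpha,\alpha^{-1},\beta,\beta^{-1},\alpha\beta,(\alpha\beta)^{-1})$ and computing the symmetric functions $f_j$ in $\alpha,\beta$, one obtains an algebraic identity linking the $f_j$ that every $G_2$-element must satisfy (the analogue of $f_3=f_1^2$ from Lemma \ref{G2Sp}, now for the seven-dimensional case with the extra fixed eigenvalue $1$). Comparing this identity against the coefficients $f_5=0$, $f_4=1-a$, $f_3=a-1$, $f_2=1$ read off from $\chi_z$, I expect to obtain a polynomial equation in $a$ over $\F_p$; the claim $a\neq 1,2$ then emerges as precisely the condition under which this equation fails, giving the desired contradiction. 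The main obstacle will be deriving the correct $G_2$ constraint in the $7$-dimensional setting and verifying that the resulting equation in $a$ has only $a=1$ and $a=2$ as its relevant roots in $\F_q$; this requires carefully extracting the symmetric-function relation for the $G_2$ torus (citing \cite{G2} as in Lemma \ref{G2Sp}) and then a short but delicate coefficient comparison, rather than any deep structural argument.
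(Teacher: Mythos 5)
Your overall strategy is the same as the paper's: pass to $\mathfrak{G}=G_2(\F)$, use the torus description from \cite{G2} to get a symmetric-function relation among the coefficients of the characteristic polynomial of a semisimple element (the correct relation is $f_3=f_1+f_1^2+f_2$, and comparison with $\chi_z$ yields $a-1=0+0^2+1$, i.e.\ $a=2$), and derive a contradiction. This is exactly the paper's opening move, and it works — but only when $z$ is actually semisimple.

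The genuine gap is your step ``I would verify that its eigenvalues are distinct \dots by checking the discriminant of $\chi_z$ is nonzero.'' That verification fails: the discriminant of $\chi_z(t)$ is $(a-1)(a-5)^3(27a^2-4a-148)^2$, which vanishes not only at the excluded value $a=1$ but also at $a=5$ and at the roots of $27a^2-4a-148$, none of which are ruled out by the hypothesis $a\neq 1,2$. (Note also that the fact that $\chi_z$ equals the minimum polynomial makes $z$ regular, not semisimple, so regularity alone does not place $z$ in a maximal torus.) For those values of $a$ your argument via $z$ simply does not apply, and the paper spends the bulk of its proof on precisely these cases: it switches to $w=[x,y]$, whose characteristic polynomial has discriminant $(a-2)^6(a+2)^3(a-6)^3(27a^2-108a+76)^2$, and runs the same torus comparison on $w$; when that discriminant also vanishes (e.g.\ $a=5$ with $p=7$ or $211$, or $p=53$ with $a=47$) it resorts to $wz$ and $wz^2$. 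Your proposal as written would establish the lemma only under the additional unstated hypotheses $(a-5)(27a^2-4a-148)\neq 0$, so you need to supply the analogous case analysis with auxiliary elements of $H$ to close the argument.
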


\begin{proof}
Since we are assuming $b=a$, the characteristic polynomial of $z=xy$ is 
\begin{equation}\label{charOm}
\chi_{z}(t)=t^7-t^5-(a-1)t^ 4+(a-1)t^3+t^2-1,
\end{equation}
and the characteristic polynomial of $w=[x,y]$ is 
\begin{equation}\label{commOm}
\chi_{w}(t)=t^7+t^6+t^5-(a^2-4a+3)t^4+(a^2-4a+3)t^3-t^2-t-1.
\end{equation}
The discriminants of $\chi_z(t)$ and $\chi_w(t)$ are, respectively,
$(a-1)(a-5)^ 3 (27 a^2-4a-148)^ 2$ and $(a-2)^6(a+2)^3(a-6)^3(27a^2-108a+76)^2$.
Suppose that $H$ is contained in a subgroup $M\cong G_2(q)$.
Hence we may embed $H$ in $\mathfrak{G}=G_2(\F)$.

Assume first that $a\neq 5$ and $27a^2-4a-148\neq 0$.
Then, the eigenvalues of $z$ are pairwise distinct and so $z$ is a semisimple element of
$\mathfrak{G}$, that is, it belongs to a maximal torus  of $\mathfrak{G}$. By  \cite{G2}
$z$ is conjugate to $s=\diag(1,\alpha,\beta,\alpha \beta,\alpha^ {-1},\beta^ {-1},
(\alpha\beta)^{-1})$ where $\alpha,\beta\in \F^\ast$.
Let $\chi_s(t)=t^ 7+\sum_{j=1}^ 6 f_j t^ j-1$ be the characteristic polynomial of $s$.
It is easy to see that $f_3=f_1+f_1^ 2+f_2$. Comparison with \eqref{charOm} gives
$(a-1)=0+0^ 2 +1$, in contrast with the hypothesis $a\neq 2$.

Consider now the case $a=5$ (and so $p\neq 3$). For this value of $a$, 
the discriminant of $\chi_w(t)$ is
$-3^6 7^3 211^2$ and so, if $p\neq 7,211$, $w$ is a semisimple element of
$\mathfrak{G}$. Proceeding as for $z$, we get  
the contradiction $8=-1+(-1)^2+(-1)$.
If $p=7,211$ we consider the element $wz$.
We get $\chi_{wz}(t)= t^7+3 t^6-16 t^5-82 t^4+82 t^3+16 t^2-3 t-1$, whose discriminant is
non-zero. Hence $wz$ is a semisimple element of
$\mathfrak{G}$, but we obtain the contradiction $82=-3+(-3)^2+16$.

Next, suppose that $a$ is a root of $27a^2-4a-148$. We show that for these values of $a$, the discriminant of $\chi_w(t)$ is non-zero, except when $p=53$ and $a=-6$.
First, notice that for $a=-2$  we get the contradiction $p=2$ and  $a=6$ implies $p=5$, whence $a=1$.
So, assume that $a$ is a root of $27a^2-108a+76$. This gives the conditions $p=5, 53$ and $a=\frac{28}{13}$.
However, for $p=5$ we get the contradiction $a=1$.

Assume $p=53$ and $a=47$. In this case, 
$\chi_{wz^2}(t)=t^7+14 t^6+ 25 t^5+ 6t^4 -6 t^3-25 t^2 -14 t-1$
has non-zero discriminant.
Hence, $wz^2$ is a semisimple element of
$\mathfrak{G}$ but this gives  the contradiction $-6=-14+(-14)^2-25$.
In the other cases, $w$ is a semisimple element of $\mathfrak{G}$ and this produces the contradiction
$(a^2-4a+3)=-1+(-1)^2-1$, that is, $a=2$.
\end{proof}

\begin{proposition}\label{Om35}
The groups $\Omega_7(3)$ and $\Omega_7(5)$ are $(2,3)$-generated. 
\end{proposition}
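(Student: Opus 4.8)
The plan is to dispose of the two small fields $q=3$ and $q=5$ directly, in the style of Proposition~\ref{U72}, since they fall outside the range where a uniform parameter is available. For each group I would exhibit an explicit pair $x,y$ with $x^2=y^3=I$ and $H=\langle x,y\rangle\le\Omega_7(q)$, establish absolute irreducibility, and then eliminate every maximal subgroup of $\Omega_7(q)$ from \cite[Tables 8.39 and 8.40]{Ho}. For $q=5$ I would first attempt to reuse the uniform generators \eqref{gen7} with $a=b=0$: then $a-1=-1$ is a square in $\F_5^\ast$, so Lemma~\ref{inOm} gives $H\le\Omega_7(5)$, and conditions (i) and (ii) of Proposition~\ref{irr7} are readily checked to hold, yielding absolute irreducibility. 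For $q=3$ no parameter is admissible: the only candidates in $\F_3$ are $a\in\{0,1,2\}$, and $a=1,2$ force reducibility through Proposition~\ref{irr7}(ii) while $a=0$ yields a non-square spinor norm (Lemma~\ref{inOm}, since $-1$ is a non-square in $\F_3^\ast$). Hence for $q=3$ I would construct genuinely new matrices over $\F_3$ and verify by hand that $x^2=y^3=I$, that a suitable quadratic form is preserved, and that the spinor norm of $x$ is a square.

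Once absolute irreducibility is secured, the elimination of the Aschbacher classes is largely uniform. Irreducibility excludes $\mathcal{C}_1$ and $\mathcal{C}_3$; as the degree $7$ is prime, every imprimitive decomposition has one-dimensional blocks, so Lemma~\ref{monomial7} rules out $\mathcal{C}_2$; the field being prime, $\mathcal{C}_5$ is empty; and, exactly as in the proof of Theorem~\ref{main7}, Lemma~\ref{diag} together with \cite[Lemma 2.3]{35} removes $\mathcal{C}_6$. The remaining work lies in class $\mathcal{S}$ and in the sporadic subgroups peculiar to these small fields. For $q=5$ the almost simple candidates are $G_2(5)$ and $\Sp_6(2)$, excluded by Lemmas~\ref{SG2} and~\ref{S62} (both apply since $a=0\neq1,2$). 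For $q=3$, where the generators are not of the shape \eqref{gen7} and so the earlier lemmas no longer apply, I would argue with element orders as in Proposition~\ref{U72}: the reducible subgroups $L_4(3)$ and $U_4(3)$ (stabilizers of non-degenerate $6$-spaces) and the parabolics fall to irreducibility, while an element of $H$ of order $13$ — the primitive prime divisor of $3^3-1$, which divides none of $|S_9|$, $|\Sp_6(2)|$, $|2^6{:}A_7|$ — removes the remaining local and alternating-type maximal subgroups, and an element of order $20$ removes $G_2(3)$ (no multiple of $5$ can be an element order there, as $5\nmid|G_2(3)|$).

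The main obstacle is the case $q=3$. Because no value of the parameter works, one cannot invoke Lemmas~\ref{inOm}, \ref{SG2}, \ref{S62}, and both absolute irreducibility and form-invariance must be re-established from scratch for the ad hoc generators. Furthermore the maximal-subgroup structure of $\Omega_7(3)$ is rich, and \emph{no single element order} isolates $H$ from all the sporadic candidates simultaneously: order $13$ does not exclude $G_2(3)$, which contains elements of order $13$, whereas the orders ($14$ or $20$) that do exclude $G_2(3)$ already occur in $S_9$. One is therefore forced to combine at least two diagnostic words of prescribed orders. All of these are finite, explicit verifications, so the proof closes once suitable matrices and words are located; the only genuinely creative step is finding those generators together with short words realising the required orders.
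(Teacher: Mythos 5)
Your strategy is sound, but it diverges from the paper in one useful way and falls short of a proof in another. The paper's own argument is much more compact: it exhibits a \emph{single} explicit pair $x,y$ with entries in $\Z[\tfrac12]$ (e.g.\ $7/2$, $-1/2$, $7$) whose reductions modulo $3$ and modulo $5$ serve simultaneously as generators of $\Omega_7(3)$ and $\Omega_7(5)$, and then defers the verification to the method of Proposition~\ref{U72} (locate an element whose order pins down the possible maximal overgroups, then discard them). Your treatment of $q=5$ is genuinely different and, as far as I can check, correct: taking $a=b=0$ in \eqref{gen7}, conditions (i) and (ii) of Proposition~\ref{irr7} hold over $\F_5$ (the expressions evaluate to $4$ and $-32\equiv 3$), $a-1=-1=2^2$ is a square so Lemma~\ref{inOm} puts $H$ inside $\Omega_7(5)$, and Lemmas~\ref{diag}, \ref{monomial7}, \ref{class S7}, \ref{S62} and \ref{SG2} are all stated under hypotheses that $a=0$ satisfies. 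The exclusion $a\neq 0$ in Theorem~\ref{mainOm7}(i) is not actually invoked by any of these lemmas when $q=p$, so your shortcut legitimately extends the uniform family to $q=5$ and avoids ad hoc matrices for that case. Your diagnosis of why $q=3$ cannot be absorbed the same way (every $a\in\F_3$ fails either irreducibility or the spinor-norm condition) is also correct.

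The genuine gap is the case $q=3$ itself: your proposal ends with ``I would construct genuinely new matrices over $\F_3$'' without producing them, and for this proposition the explicit generators \emph{are} the proof --- everything else (checking $x^2=y^3=I$, form-invariance, the spinor norm, irreducibility, and the element-order sieve) is routine once they are on the table. Your surrounding analysis of the sieve is accurate and matches what ``proceed as in Proposition~\ref{U72}'' requires: an element of order $13$ (a primitive prime divisor of $3^3-1$) eliminates $S_9$, $\Sp_6(2)$ and $2^6{:}A_7$ but not $G_2(3)$, whose order is $2^6\cdot 3^6\cdot 7\cdot 13$, while an order divisible by $5$ eliminates $G_2(3)$ but not $S_9$, so two diagnostic words are indeed needed; but until the matrices and those words are exhibited and the orders verified, the statement for $\Omega_7(3)$ remains unproved. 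The paper closes this by writing down the matrices explicitly; you should do the same (or adopt the paper's pair, which has the added economy of covering $q=5$ as well).
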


\begin{proof}
It suffices to take the following matrices of $\SL_7(p)$ for $p=3,5$:
$$
x=\left(\begin{array}{ccccccc}
     0 &     0 &    0 &    1 &    0 &    0 &    0 \\
       0 &    0 &    0 &    0 &    1 &    0 &    7/2 \\
       0 &    0 &    0 &    0 &    0 &    1 & -1/2 \\
       1 &    0 &    0 &    0 &    0 &    0 &    0 \\
       0 &    1 &    0 &    0 &    0 &    0 &    7/2 \\
       0 &    0 &    1 &    0 &    0 &    0 & -1/2 \\
       0 &    0 &    0 &    0 &    0 &    0 &   -1 \\ 
\end{array}\right), \quad
y=\left(\begin{array}{ccccccc}
   1 &    0 &    0 &    0 &    0 &    0 &    0 \\
      0 &    1 &    0 &    0 &    0 &    0 &  7 \\
    0 &    0 &    1 &    1 &   -1 &    0 &    0 \\
     0 &    0 &    0 &    0 &   -1 &    0 &    0 \\
      0 &    0 &    0 &    1 &   -1 &    0 &    0 \\
      0 &    0 &    0 &    0 &    0 &    0 &   -1 \\
      0 &    0 &    0 &    0 &    0 &    1 &   -1 \\
        \end{array}\right).
$$
To prove that $H=\langle x,y\rangle=\Omega_7(p)$, proceed as in
Proposition \ref{U72}.
\end{proof}

\begin{theorem}\label{mainOm7}
Let $x,y$ as in  \eqref{gen7} with $b=a\in \F_{q}$ and $q$ odd.
Suppose that
\begin{itemize}
\item[{\rm(i)}] $a\not \in \{0,1, \pm 2\}$;
\item[{\rm(ii)}] $a-1$ is a square in $\F_q ^ \ast$;
\item[{\rm (iii)}] $\F_{q}=\F_p[a]$.
\end{itemize}
Then $H=\langle x,y\rangle=\Omega_7(q)$. Moreover, if $q\geq 7$, then there exists
$a\in \F_{q}$ satisfying conditions {\rm (i)} to {\rm (iii)} and the
groups $\Omega_7(q)$  are $(2,3)$-generated.
\end{theorem}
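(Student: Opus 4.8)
Theorem \ref{mainOm7} — The plan follows the same Aschbacher-class elimination strategy used for Theorem \ref{main7}, specialized to the orthogonal setting via the results already proved for hypothesis \eqref{O}. Let me think through what's needed and where the work concentrates.

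The statement has two parts: (1) under conditions (i)-(iii), $H = \Omega_7(q)$; (2) for $q \geq 7$ a suitable $a$ exists. For part (1), I'd first invoke Proposition \ref{irr7}: conditions (i)-(ii) should force the irreducibility conditions (i)-(ii) of that proposition (with $b=a$). Then Lemma \ref{inOm} gives $H \leq \Omega_7(q)$ exactly because $a-1$ is a square. Then I eliminate each Aschbacher class for a maximal $M \supseteq H$: $\mathcal{C}_1, \mathcal{C}_3$ by absolute irreducibility; $\mathcal{C}_2$ by Lemma \ref{monomial7}; $\mathcal{C}_5$ via the field-of-definition argument (the coefficient $a-1$ of $\chi_z$ plus condition (iii) $\F_q = \F_p[a]$ prevents $z$ from being conjugate into a subfield subgroup); $\mathcal{C}_6$ via Lemma \ref{diag} and \cite[Lemma 2.3]{35} since $z^6$ is nonscalar; and the geometric/class-$\mathcal{S}$ possibilities. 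Note the check in $k=9,11$ of Lemma~\ref{diag} requires $p \neq 2$, consistent with $q$ odd here.

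The class $\mathcal{S}$ step is where the real content sits. In dimension 7 over $\F_q$ with $q$ odd, the relevant $\mathcal{S}$-subgroups of $\Omega_7(q)$ are essentially $G_2(q)$ together with small/cross-characteristic groups. Lemma \ref{SG2} handles $G_2(q)$ precisely under the hypothesis $a \neq 1, 2$, which is part of condition (i); the remaining small class-$\mathcal{S}$ candidates should be ruled out by order/element-order arguments analogous to Lemma \ref{class S7} (using Lemma \ref{diag}). So part (1) is a clean assembly of the preceding lemmas, with condition (i)'s exclusions of $0, 1, \pm 2$ matching exactly the degenerate values where those lemmas break down.

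Part (2), the existence of $a$, is the genuine obstacle: conditions (i)-(iii) are a system of open conditions on $\F_q^\ast$, and I must show the admissible set is nonempty for $q \geq 7$. The plan is a counting argument mirroring the proof of Theorem \ref{main7}. Condition (iii) is satisfied whenever $a$ generates $\F_q$ over $\F_p$; by \cite[Lemma 2.1]{35} the number of generators is large for $q$ large. Condition (ii) restricts to the quadratic residues of $a-1$, roughly half of $\F_q$. Conditions (i) exclude only four elements. The delicate interaction is (ii) with (iii): one needs an element $a$ that is simultaneously a field generator \emph{and} has $a-1$ a nonzero square. I would estimate the number of $a \in \F_q^\ast$ with $a-1$ a square (about $q/2$), subtract those failing (iii) and the four exceptions, and show the remainder is positive for $q \geq 7$; for the small cases $7 \leq q < \,$(threshold) one exhibits explicit $a$ by a short table, exactly as in Theorem \ref{main7}. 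The genuinely nontrivial inequality is ensuring the generator-count and the square-count overlap, which is where I expect the bulk of the estimation effort to lie.
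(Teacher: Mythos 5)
Your first part follows the paper's proof essentially verbatim: Proposition \ref{irr7} for absolute irreducibility (note that with $b=a$ condition (i) of that proposition becomes $\prod_j(a(1+\omega^j)-2\omega^{2j})\neq 0$ and condition (ii) becomes $8(a-1)(a-2)^2\neq 0$, which is exactly why the theorem excludes $a\in\{0,1,\pm 2\}$), then Lemma \ref{unitary7} and Lemma \ref{inOm} to land in $\Omega_7(q)$, then Lemma \ref{monomial7} for $\mathcal{C}_2$, the coefficient $a-1$ of $\chi_{xy}$ together with (iii) for $\mathcal{C}_5$, and Lemmas \ref{SG2} and \ref{S62} for class $\mathcal{S}$; your only vagueness is that you gesture at ``remaining small candidates'' where the paper names the one that matters, $\Sp_6(2)$, and disposes of it with the commutator-order computation of Lemma \ref{S62} (which needs $a\neq 1,2$, again covered by (i)). Where you genuinely diverge is the existence statement. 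You propose a counting argument: intersect the roughly $(q-1)/2$ elements with $a-1$ a nonzero square with the field generators, subtract the four exceptions, and patch small $q$ by a table. That would work, but it forces you to control the overlap of two large subsets of $\F_q$ and to treat several small prime powers by hand. The paper instead takes $a=\alpha^2+1$ for $\alpha\in\F_q^\ast$ of order $q-1$: condition (ii) holds by construction since $a-1=\alpha^2$, condition (iii) holds because an element of order $(q-1)/2$ cannot lie in a proper subfield (as $(q-1)/2>\sqrt{q}-1$ for $q>1$), and (i) excludes only a bounded number of the $\varphi(q-1)$ choices of $\alpha$, so a suitable $\alpha$ exists for $q\geq 7$. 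This parametrization eliminates the intersection problem entirely and needs no case-by-case table, so while your route is sound in principle, it concentrates effort exactly where the paper's construction makes the issue disappear.
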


\begin{proof}
By Condition (i) and Proposition \ref{irr7} the subgroup $H$ is absolutely irreducible. 
From Lemma \ref{unitary7} it follows that $H\le \SO_7(q)$ and by Condition (ii) and Lemma \ref{inOm}, $H\leq \Omega_7(q)$. 
Let $M$ be a maximal subgroup of $\Omega_7(q)$ which contains $H$. 
Since $H$  is absolutely irreducible,  $M\not\in\mathcal{C}_1$.
Moreover $M$ cannot belong to $\mathcal{C}_2$  by  Lemma \ref{monomial7}. 
Also, by (iii) we have $\F_p[a]=\F_{q}$ and since $a-1$ is  a coefficient of the
characteristic polynomial of $xy$ (see \eqref{charOm}), 
the matrix $xy$ cannot be conjugate to any element of $\SO_7(q_0)$, with $q_0< q$. 
Thus $M\not\in \mathcal{C}_5$. Finally,  $M$ cannot be in class $\mathcal{S}$.
Indeed $H$ cannot be contained in a subgroup isomorphic to $\Sp_6(2)$, by Lemma \ref{S62}, or isomorphic to $G_2(q)$, by Condition (i) and  Lemma \ref{SG2}.
We conclude that $H=\Omega_7(q)$.

The existence of some $a$ satisfying all the assumptions (when $q>5$) is quite clear (take $a=\alpha^2+1$ for some
suitable $\alpha\in \F_q^\ast$ of order $q-1$).
\end{proof}

\begin{remark}
{\rm Taking $b=0$ in \eqref{gen7} and proceeding as in the proof of  previous theorems, for all $q\geq 2$ it is always possible to find a value of $a\in \F_q$ such that the corresponding group $H$ is $\SL_7(q)$. In other words, the elements $x,y$ of \eqref{gen7} are uniform $(2,3)$-generators for all classical groups of dimension $7$.}
\end{remark}

\end{document}